\theoremstyle{plain}
\newtheorem{thm}{Theorem}[section]
\newtheorem*{thm*}{Theorem}
\newtheorem{prop}[thm]{Proposition}
\newtheorem{lem}[thm]{Lemma}
\theoremstyle{definition}
\newtheorem{mydef}{Definition}[section]
\theoremstyle{remark}
\let\myskip=\medskip
\def\definebb#1=#2.{\def#1{{{\mathbb #2}^{\vphantom{x}}}}}
\def\Cal#1{\mathcal{#1}}
\def\calB{{\Cal B}}
\def\calM{{\Cal M}}
\def\calV{{\Cal V}}
\def\hyp{\h}
\def\arf{\si}
\def\piorb{\pi_1}
\def\piorbO{\pi_1^0}
\def\tT{\tilde{T}}
\def\al{{\alpha}}
\def\be{{\beta}}
\def\Ga{{\Gamma}}
\def\ga{{\gamma}}
\def\de{{\delta}}
\def\ve{{\varepsilon}}
\def\si{{\sigma}}
\def\Si{{\Sigma}}
\def\Om{{\Omega}}
\let\emptyset=\varnothing
\def\st{\,\,\big|\,\,}
\def\<{\langle}
\def\>{\rangle}
\let\ge=\geqslant
\let\le=\leqslant
\DeclareMathOperator{\Aut}{Aut}
\DeclareMathOperator{\Mod}{Mod}
\let\mod=\undefined \DeclareMathOperator{\mod}{mod}
\begin{document}

\author[Sergey Natanzon]{Sergey Natanzon}
\address{National Research University Higher School of Economics, Vavilova Street 7, 117312 Moscow, Russia}
\address{Institute of Theoretical and Experimental Physics (ITEP), Moscow, Russia}
\email{natanzon@mccme.ru}
\author[Anna Pratoussevitch]{Anna Pratoussevitch}
\address{Department of Mathematical Sciences\\ University of Liverpool\\ Liverpool L69~7ZL}
\email{annap@liv.ac.uk}

\title{Moduli Spaces of Higher Spin Klein Surfaces}


\begin{date}  {\today} \end{date}

\thanks{Grant support for S.N.: The article was prepared within the framework of the Academic Fund Program
at the National Research University Higher School of Economics (HSE) in 2015--16 (grant Nr 15-01-0052)
and supported within the framework of a subsidy granted to the HSE by the Government of the Russian Federation for the implementation of the Global Competitiveness Program.
Grant support for A.P.: The work was supported in part by the Leverhulme Trust grant RPG-057.}



\begin{abstract}
We study the connected components of the space of higher spin bundles on hyperbolic Klein surfaces.
A Klein surface is a generalisation of a Riemann surface
to the case of non-orientable surfaces or surfaces with boundary.
The category of Klein surfaces is isomorphic to the category of real algebraic curves.
An $m$-spin bundle on a Klein surface is a complex line bundle whose $m$-th tensor power is the cotangent bundle.
The spaces of higher spin bundles on Klein surfaces are important because of their applications in singularity theory
and real algebraic geometry,
in particular for the study of real forms of Gorenstein quasi-homogeneous surface singularities.
In this paper we describe all connected components of the space of higher spin bundles on hyperbolic Klein surfaces
in terms of their topological invariants and prove that any connected component
is homeomorphic to a quotient of ${\mathbb R}^d$ by a discrete group.
We also discuss applications to real forms of Brieskorn-Pham singularities.
\end{abstract}

\subjclass[2010]{Primary 30F50, 14H60, 30F35; Secondary 30F60}





\keywords{Higher spin bundles, real forms, Riemann surfaces, Klein surfaces, Arf functions, lifts of Fuchsian groups}

\maketitle


\section{Introduction}

\myskip
A complex line bundle $e:L\to P$ on a Riemann surface~$P$, denoted $(e,P)$,
is an $m$-{\it spin bundle\/} for an integer $m>1$ if its $m$-th tensor power $e^{\otimes m}:L^{\otimes m}\to P$
is isomorphic to the cotangent bundle of~$P$.
The classical $2$-spin structures on compact Riemann surfaces
were introduced by Riemann as theta characteristics
and play an important role in mathematics.
Their modern interpretation as complex line bundles and classification was given by Atiyah~\cite{A} and Mumford~\cite{Mu},
who showed that $2$-spin bundles have a topological invariant $\de=\de(e,P)$ in~$\{0,1\}$, the {\it Arf invariant},
which is determined by the parity of the dimension of the space of sections of the bundle.
Moreover, the space $S^2_{g,\de}$ of $2$-spin bundles on Riemann surfaces of genus~$g$ with Arf invariant~$\de$,
i.e.\ the space of such pairs $(e,P)$,
is homeomorphic to a quotient of $\r^{6g-6}$ by a discrete group of autohomeomorphisms, see~\cite{N1989a, Nbook}.

\myskip
The study of spaces of $m$-spin bundles for arbitrary~$m$ started more recently
because of the remarkable connections between the compactified moduli space of $m$-spin bundles
and the theory of integrable systems~\cite{Wi},
and because of their applications in singularity theory~\cite{Dolgachev:1983, NP:2011, NP:2013}.
It was shown that for odd~$m$ the space of $m$-spin bundles is connected,
while for even~$m$ (and $g>1$) there are two connected components,
distinguished by an invariant which generalises the Arf invariant~\cite{Jarvis:2000}.
In all cases each connected components of the space of $m$-spin bundles on Riemann surfaces of genus~$g$
is homeomorphic to a quotient of $\r^{6g-6}$ by a discrete group of autohomeomorphisms, see~\cite{NP:2005, NP:2009}.
The homology of these moduli spaces was studied further in~\cite{Jarvis:2001, JKV:2001, ChZ, FShZ, RW1, RW2, PPZ, SSZ}.

\myskip
The aim of this paper is to determine the topological structure of the space of $m$-spin bundles
on hyperbolic Klein surfaces.
A {\it Klein surface\/} is a non-orientable topological surface with a maximal atlas whose transition maps
are {\it dianalytic}, i.e.\ either holomorphic or anti-holomorphic, see~\cite{AllingGreenleaf:1971}.
Klein surfaces can be described as quotients $P/\<\tau\>$,
where $P$ is a compact Riemann surface and $\tau:P\to P$ is an anti-holomorphic involution on~$P$.
The category of such pairs is isomorphic to the category of Klein surfaces via $(P,\tau)\mapsto P/\<\tau\>$.
Under this correspondence the fixed points of~$\tau$ correspond to the boundary points of the Klein surface.
In this paper a Klein surface will be understood as an isomorphism class of such pairs $(P,\tau)$.
We will only consider connected compact Klein surfaces.
The category of connected compact Klein surfaces
is isomorphic to the category of irreducible real algebraic curves (see~\cite{AllingGreenleaf:1971}).

\myskip
The boundary of the surface $P/\<\tau\>$, if not empty, decomposes into $k$ pairwise disjoint simple closed smooth curves.
These closed curves are called {\it ovals\/}
and correspond to connected components of the set of fixed points~$P^{\tau}$ of the involution $\tau:P\to P$.
On the real algebraic curve they correspond to connected components of the set of real points.

\myskip
The {\it topological type\/} of the surface $P/\<\tau\>$ is determined by the triple $(g,k,\ve)$,
where $g$ is the genus of~$P$,
$k$ is the number of connected components of the boundary of~$P/\<\tau\>$
and $\ve\in\{0,1\}$ with $\ve=1$ if the surface is orientable and $\ve=0$ otherwise.
The following conditions are satisfied:
$1\le k\le g+1$ and $k\equiv g+1~(\mod2)$ in the case $\ve=1$ and $0\le k\le g$ in the case $\ve=0$.
These classification results were obtained by Weichold~\cite{Weichold:1883}.
It is known that the topological type completely determines the connected component of the space of Klein surfaces.
Moreover, the space $M_{g,k,\ve}$ of Klein surfaces of topological type $(g,k,\ve)$
is homeomorphic to the quotient of $\r^{3g-3}$ by a discrete subgroup of automorphisms.
In addition to the invariants $(g,k,\ve)$,
it is useful to consider an invariant that we will call the geometric genus of~$(P,\tau)$.
In the case $\ve=1$ the geometric genus $(g+1-k)/2$ is the number of handles
that need to be attached to a sphere with holes to obtain a surface homeomorphic to~$P/\<\tau\>$.
In the case $\ve=0$ the geometric genus $[(g-k)/2]$ is the half of the number of M\"obius bands
that need to be attached to a sphere with holes to obtain a surface homeomorphic to~$P/\<\tau\>$.

\myskip
An {\it $m$-spin bundle on a Klein surface\/}~$(P,\tau)$ is a pair~$(e,\be)$,
where $e:L\to P$ is an $m$-spin bundle on~$P$ and $\be:L\to L$ is an anti-holomorphic involution on~$L$
such that $e\circ\be=\tau\circ e$, i.e.\ the following diagram commutes:
$$
  \begin{CD}
   L@>{e}>>P\\
   @V{\be}VV@VV{\tau}V\\
   L@>{e}>>P\\
  \end{CD}
$$

\myskip
The spaces of higher spin bundles on Klein surfaces are important because of their applications in singularity theory
and real algebraic geometry.
We are particularly interested in the applications to the classification of real forms of complex singularities.
Any Brieskorn-Pham singularity, i.e.\ singularity of the form $x^a+y^b+z^c=0$, can be constructed
from an $m$-spin bundle on a Riemann surface~$P$ (roughly speaking by contracting the zero section of the bundle)
\cite{Milnor:1975, Neumann:1977}
and real forms of the singularity correspond to $m$-spin bundles on Klein surfaces~$(P,\tau)$.
More generally any hyperbolic Gorenstein quasi-homogeneous surface singularity can be constructed
from an $m$-spin bundle on a quotient of the form $\hyp/\Ga$, where $\Ga$ is a Fuchsian group, possibly with torsion,
see~\cite{Dolgachev:1975, Dolgachev:1977, Dolgachev:1983}.
An extension of the results of our paper to such $m$-spin bundles
will lead to a classification of real forms of hyperbolic Gorenstein quasi-homogeneous surface singularities.
The first results in this direction were obtained by H.~Riley in her Ph.D.\ thesis~\cite{Ril}.
Other classes of complex singularities for which real forms have been studied are simple singularities and cusp singularities~\cite{AC, W1, W2, W4}.
See section~\ref{sec-sing} for more details of applications to singularity theory.

\myskip
Another important connection is between $2$-spin bundles on Klein surfaces
and Abelian Yang-Mills theory on real tori~\cite{OT:2013} and possible generalisations to $m$-spin bundles.

\myskip
In this paper we determine the connected components of the space of $m$-spin bundles on Klein surfaces,
i.e.\ equivalence classes of $m$-spin bundles on Klein surfaces up to topological equivalence (Definition~\ref{def-topeq-Arf}).
We find the topological invariants that determine such an equivalence class
and determine all possible values of these invariants.
We also show that every equivalence class is a connected set homeomorphic to a quotient of $\r^n$
by a discrete group, where the dimension~$n$ and the group depend on the class.
The special case $m=2$ was studied in~\cite{N1989b, N1990, N1999, Nbook}.

\myskip
While $2$-spin bundles on a Riemann surface~$P$ can be described in terms of quadratic forms on $H_1(P,\z/2\z)$,
for higher spin bundles the situation is more complex.
The main innovation of our method is to assign to every $m$-spin bundle on a Klein surface $(P,\tau)$
a function on the set of simple closed curves in~$P$ with values in $\z/m\z$, called real $m$-Arf function~\cite{NP:2016}.
Thus the problem of topological classification of $m$-spin bundles on Klein surfaces
is reduced to topological classification of real $m$-Arf functions.
We introduce a complete set of topological invariants of real $m$-Arf functions. 
We then construct for any real $m$-Arf function~$\arf$ a canonical generating set,
i.e.\ a generating set of the fundamental group of~$P$
on which $\arf$ assumes values determined by the topological invariants.

\myskip
We will now explain the results in more detail.
Let $(P,\tau)$ be a Klein surface of type $(g,k,\ve)$.
In this paper we will consider hyperbolic Klein surfaces $(P,\tau)$,
i.e.\ we assume that the underlying Riemann surface~$P$ is hyperbolic, $g\ge2$.
We will also assume that the geometric genus of $(P,\tau)$ is positive,
i.e.\ $k\le g-2$ if $\ve=0$ and $k\le g-1$ if $\ve=1$.

\myskip
We show that if $m$ is odd and there exists an $m$-spin bundle on the Klein surface~$(P,\tau)$ then $g\equiv1~(\mod m)$.
Moreover, assuming that $m$ is odd and $g\equiv1~(\mod m)$,
the space of $m$-spin bundles on Klein surfaces of type $(g,k,\ve)$ is not empty and is connected.

\myskip
Now let $m$ be even.
Consider an $m$-spin bundle~$e$ on the Klein surface~$(P,\tau)$.
A restriction of the bundle~$e$ gives a bundle on the ovals.
Let $K_0$ and $K_1$ be the sets of ovals on which the bundle is trivial and non-trivial respectively.
We show that $|K_1|\cdot m/2\equiv1-g~(\mod m)$.

\myskip
If $m$ is even and $\ve=0$,
the Arf invariant~$\de$ of the bundle~$e$ and the cardinalities $k_i=|K_i|$ for $i=0,1$
determine a (non-empty) connected component of the space of $m$-spin bundles
on Klein surfaces of type $(g,k_0+k_1,0)$ if and only if
$$k_1\cdot\frac{m}2\equiv1-g~(\mod m).$$

\myskip
If $m$ is even and $\ve=1$,
the bundle~$e$ determines a decomposition of the set of ovals in two disjoint sets, $K^0$ and $K^1$,
of {\it similar\/} ovals (for details see section~\ref{sec-topinv}).
The bundle~$e$ induces $m$-spin bundles on connected components of $P\backslash P^{\tau}$.
The Arf invariant~$\tilde\de$ of these induced bundles
does not depend on the choice of the connected component of $P\backslash P^{\tau}$.
This invariant~$\tilde\de$ and the cardinalities $k_i^j=|K_i\cap K^j|$ for $i,j\in\{0,1\}$
determine a connected component of the space of $m$-spin bundles
on Klein surfaces of type $(g,k_0^0+k_0^1+k_1^0+k_1^1,1)$ if and only if 
\begin{enumerate}[$\bullet$]
\item
If $g>k+1$ and $k^0_0+k^1_0\ne0$ then $\tilde\de=0$.
\item
If $g>k+1$ and $m\equiv0~(\mod4)$ then $\tilde\de=0$.
\item
If $g=k+1$ and $k^0_0+k^1_0\ne0$ then $\tilde\de=1$.
\item
If $g=k+1$ and $m\equiv0~(\mod4)$ then $\tilde\de=1$.
\item
If $g=k+1$ and $k^0_0+k^1_0=0$ and $m\equiv2~(\mod4)$ then $\tilde\de\in\{1,2\}$.
\item
$(k^0_1+k^1_1)\cdot m/2\equiv1-g~(\mod m)$.
\end{enumerate}

\myskip
We also show that every connected component of the space of $m$-spin bundles on Klein surfaces of genus~$g$
is homeomorphic to a quotient of $\r^{3g-3}$ by a discrete subgroup of automorphisms which depends on the component
(see Theorem~\ref{thm-moduli-spin}).

\myskip
The paper is organised as follows:

\myskip
In section~\ref{sec-classification} we recall the classification of real $m$-Arf functions from~\cite{NP:2016}.
We determine the topological invariants of real $m$-Arf functions in section~\ref{sec-toptypes}.
In section~\ref{sec-moduli} we use these topological invariants
to describe connected components of the space of $m$-spin bundles on Klein surfaces.
In section~\ref{sec-sing} we explain the connection between $m$-spin bundles on Klein surfaces
and real forms of complex singularities. 


\section{Higher Spin Structures on Klein Surfaces}

\label{sec-classification}

\myskip
A {\it Klein surface} is a topological surface with a maximal atlas whose transition maps are either holomorphic or anti-holomorphic.
A {\it homomorphism\/} between Klein surfaces is a continuous mapping which is either holomorphic or anti-holomorphic in local charts.

\myskip
Let us consider pairs~$(P,\tau)$,
where $P$ is a compact Riemann surface and $\tau:P\to P$ is an anti-holomorphic involution on~$P$.
For each such pair~$(P,\tau)$ the quotient $P/\<\tau\>$ is a Klein surface
and each isomorphism class of Klein surfaces contains a surface of the form $P/\<\tau\>$.
Moreover, two such quotients $P_1/\<\tau_1\>$ and $P_2/\<\tau_2\>$ are isomorphic as Klein surfaces
if and only if there exists a biholomorphic map~$\psi:P_1\to P_2$ such that $\psi\circ\tau_1=\tau_2\circ\psi$,
in which case we say that the pairs $(P_1,\tau_1)$ and $(P_2,\tau_2)$ are {\it isomorphic}.
Hence from now on instead of Klein surfaces we will consider isomorphism classes of pairs $(P,\tau)$.
The category of such pairs $(P,\tau)$ is isomorphic to the category of real algebraic curves,
where fixed points of~$\tau$ (i.e.\ boundary points of the corresponding Klein surface)
correspond to real points of the real algebraic curve.
For example a non-singular plane real algebraic curve given by an equation $F(x,y)=0$ 
is the set of real points of such a pair~$(P,\tau)$,
where $P$ is the normalisation and compactification of the surface $\{(x,y)\in\c^2\st F(x,y)=0\}$
and $\tau$ is given by the complex conjugation, $\tau(x,y)=(\bar x,\bar y)$.

\myskip
Given two Klein surfaces $(P_1,\tau_1)$ and $(P_2,\tau_2)$,
we say that they are {\it topologically equivalent\/}
if there exists a homeomorhism~$\phi:P_1\to P_2$ such that $\phi\circ\tau_1=\tau_2\circ\phi$.

\myskip
Let $(P,\tau)$ be a Klein surface.
We say that~$(P,\tau)$ is {\it separating\/} if the set~$P\backslash P^{\tau}$ is not connected,
otherwise we say that it is {\it non-separating\/}.
The set of fixed points of the involution~$\tau$ is called the {\it set of real points\/} of~$(P,\tau)$
and denoted by~$P^{\tau}$.
The set~$P^{\tau}$ decomposes into pairwise disjoint simple closed smooth curves, called {\it ovals}.
Simple closed curves on~$P$ which are invariant under the involution~$\tau$
but do not contain any fixed points of~$\tau$ are called {\it twists}.
The {\it topological type\/} of~$(P,\tau)$ is the triple~$(g,k,\ve)$,
where $g$ is the genus of the Riemann surface~$P$,
$k$ is the number of connected components of the fixed point set $P^{\tau}$ of~$\tau$,
$\ve=0$ if $(P,\tau)$ is non-separating and $\ve=1$ otherwise.
In this paper we consider hyperbolic surfaces, hence $g\ge2$.
Weichold~\cite{Weichold:1883} classified Klein surfaces up to topological equivalence:
Two Klein surfaces are topologically equivalent if and only if they are of the same topological type.
A triple $(g,k,\ve)$ is a topological type of some Klein surface if and only if
either $\ve=1$, $1\le k\le g+1$, $k\equiv g+1~(\mod2)$ or $\ve=0$, $0\le k\le g$.
For more detailed discussion of Klein surfaces see~\cite{AllingGreenleaf:1971,N1990}.

\myskip
A line bundle~$e:L\to P$ on a Riemann surface~$P$ is an $m$-spin bundle (of rank~$1$)
if the $m$-fold tensor power~$L\otimes\cdots\otimes L\to P$ coincides with the cotangent bundle of~$P$.
For $m=2$ we obtain the classical notion of a spin bundle.
In~\cite{NP:2005, NP:2009} we proved that $m$-spin bundles on~$P$ are in 1-1-correspondence
with $m$-{\it Arf functions\/},
certain functions on the space $\piorbO(P)$ of homotopy classes of simple closed curves on~$P$ with values in~$\z/m\z$ described by simple geometric properties.
We introduced topological invariants of $m$-Arf functions,
in particular the {\it Arf inariant\/}~$\de$,
and described the conditions for the existence of an $m$-Arf function with prescribed values on a generating set of~$\piorb(P)$.

\myskip
Let $(P,\tau)$ be a Klein surface.
A classification of $m$-spin bundles on~$P$ that are invariant under~$\tau$ was given in~\cite{NP:2016}.
Such bundles are characterised by the special properties of the corresponding $m$-Arf functions,
called real $m$-Arf functions.
An $m$-Arf function~$\arf$ on~$P$ is {\it real\/} if $\arf(\tau c)=-\arf(c)$ for any~$c$ and $\arf(c)=0$ for any twist~$c$.
The mapping that assigns to an $m$-spin bundle on~$P$ the corresponding $m$-Arf function
establishes a 1-1-correspondence between $m$-spin bundles invariant under~$\tau$ and real $m$-Arf functions on $P$.
In~\cite{NP:2016} we determined the conditions for the existence of real $m$-Arf functions
with prescribed values on a {\it symmetric generating set\/,}
which is a generating set of $\piorb(P)$ which is particularly well adapted to the action of~$\tau$.
Furthermore we enumerated such real $m$-Arf functions.
For details see section 4{.}4 in~\cite{NP:2016}, in particular Theorems~4{.}9 and~4{.}10.

\section{Topological Types of Higher Arf Functions on Klein Surfaces}

\label{sec-toptypes}

\subsection{Topological Invariants}

\label{sec-topinv}

\begin{mydef}
\label{def-type-nonsep-even}
Let $(P,\tau)$ be a non-separating Klein surface of type $(g,k,0)$.
Let $m$ be even.
The {\it topological type\/} of a real $m$-Arf function~$\arf$ on~$(P,\tau)$ is a tuple $(g,\de,k_0,k_1)$,
where $g$ is the genus of~$P$,
$\de$ is the $m$-Arf invariant of~$\arf$
and $k_j$ is the number of ovals of~$(P,\tau)$ with value of~$\arf$ equal to~$j\cdot m/2$.
\end{mydef}

Real $m$-Arf functions with even~$m$ on separating Klein surfaces have additional topological invariants:

\begin{mydef}
\label{def-similar}
Let $(P,\tau)$ be a separating Klein surface of type $(g,k,1)$.
Let $P_1$ and~$P_2$ be the connected components of~$P\backslash P^{\tau}$.
Let $m$ be even.
Let $\arf$ be an $m$-Arf function on $(P,\tau)$.
We say that two ovals~$c_1$ and~$c_2$ are {\it similar\/} with respect to~$\arf$, $c_1\sim c_2$,
if $\arf(\ell\cup(\tau\ell)^{-1})$ is odd,
where $\ell$ is a simple path in~$P_1$ connecting $c_1$ and~$c_2$.
\end{mydef}

From the definition of $m$-Arf functions (see Definition~3{.}4 in~\cite{NP:2016}) it is clear
that if $\arf:\piorbO(P)\to\z/m\z$ is a real $m$-Arf function on~$(P,\tau)$ and $m$ is even,
then $(\arf~(\mod2)):\piorbO(P)\to\z/2\z$ is a real $2$-Arf function on~$(P,\tau)$.
Note that two ovals are similar with respect to the $m$-Arf function~$\arf$
if and only if they are similar with respect to the $2$-Arf function~$(\arf~(\mod2))$,
hence we obtain using~\cite{Nbook}, Theorem~3.3:

\begin{prop}
\label{similarity}
Similarity of ovals is well-defined.
Similarity is an equivalence relation on the set of all ovals with at most two equivalence classes.
\end{prop}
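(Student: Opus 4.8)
The plan is to reduce the statement entirely to the $m=2$ case, since the notion of similarity only depends on the parity of the values of $\arf$. First I would note that by the remark immediately preceding the proposition, if $\arf$ is a real $m$-Arf function on $(P,\tau)$ with $m$ even, then $\bar\arf := (\arf \mod 2)$ is a real $2$-Arf function on $(P,\tau)$; and by the observation in that same remark, two ovals $c_1$ and $c_2$ are similar with respect to $\arf$ if and only if $\arf(\ell\cup(\tau\ell)^{-1})$ is odd, i.e.\ if and only if $\bar\arf(\ell\cup(\tau\ell)^{-1})=1$, which is precisely the condition for $c_1$ and $c_2$ to be similar with respect to $\bar\arf$. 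So similarity for $\arf$ and for $\bar\arf$ coincide, and it suffices to prove all three assertions (well-definedness, that $\sim$ is an equivalence relation, and that there are at most two classes) for a real $2$-Arf function.

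Next I would invoke \cite{Nbook}, Theorem~3.3, which is the corresponding statement for $2$-Arf functions (equivalently, for real $2$-spin structures / quadratic forms on $H_1(P,\z/2\z)$). That theorem should assert exactly that for a real $2$-Arf function the relation $c_1\sim c_2$ is independent of the choice of the connecting path $\ell$ in $P_1$ (well-definedness), is reflexive, symmetric and transitive, and partitions the ovals into at most two classes. Granting this, the proposition for general even $m$ follows immediately from the identification of the two similarity relations established in the first step.

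The one point that requires a little care — and the place where I would expect the only real work to lie — is checking well-definedness carefully enough that the citation genuinely applies: namely that the value $\arf(\ell\cup(\tau\ell)^{-1})$ does not depend on the choice of simple path $\ell\subset P_1$ joining $c_1$ to $c_2$, and that one may use $P_1$ rather than $P_2$ without changing the relation. For this I would argue that if $\ell$ and $\ell'$ are two such simple paths in $P_1$, then $\ell\cup(\tau\ell)^{-1}$ and $\ell'\cup(\tau\ell')^{-1}$ are closed curves on $P$ that differ (up to homotopy and orientation) by the sum of a $\tau$-invariant closed curve in $P_1$ and its image under $\tau$ in $P_2$; since $\arf$ is real, $\arf$ vanishes on twists and changes sign under $\tau$, so this difference contributes an even amount modulo $2$. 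Then the transitivity and the "at most two classes" statement follow formally: if $c_1\sim c_2$ and $c_2\sim c_3$, concatenating paths $\ell_{12}$ and $\ell_{23}$ in $P_1$ gives a path from $c_1$ to $c_3$ with $\arf$-value the sum of the two odd values, hence even, so $c_1\not\sim c_3$ would fail to hold and instead — reinterpreting $\sim$ via "oddness" — one sees the classes are the fibres of a homomorphism to $\z/2\z$, of which there are at most two. I would phrase all of this as a direct appeal to \cite{Nbook}, Theorem~3.3, after the reduction, keeping the independent-of-path verification as the only explicit computation.
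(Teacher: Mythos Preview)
Your core approach is exactly the paper's: the proposition is stated immediately after the remark reducing similarity for~$\arf$ to similarity for $(\arf~\mod 2)$, and the paper then simply cites \cite{Nbook}, Theorem~3.3, for the $2$-Arf case, with no further argument. Your first two paragraphs reproduce this precisely.

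One caution about your third paragraph: the naive transitivity computation you sketch does not work as written. If $c_1\sim c_2$ and $c_2\sim c_3$ both mean the relevant $\arf$-value is odd, then concatenation (even if it were purely additive, which for Arf functions it is not) would give an \emph{even} value, i.e.\ $c_1\not\sim c_3$. The actual mechanism in the $2$-spin case is more delicate and uses the specific quadratic-form properties of $2$-Arf functions; this is precisely what is packaged in \cite{Nbook}, Theorem~3.3, and is why both you and the paper are right to defer to that citation rather than attempt a direct verification here.
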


\begin{mydef}
\label{def-type-sep-even}
Let $(P,\tau)$ be a separating Klein surface of type $(g,k,1)$.
Let $P_1$ and~$P_2$ be the connected components of~$P\backslash P^{\tau}$.
Let $m$ be even.
Let us choose one similarity class of ovals.
The {\it topological type\/} of a real $m$-Arf function~$\arf$ on~$(P,\tau)$ is a tuple
$$(g,\tilde\de,k^0_0,k^0_1,k^1_0,k^1_1),$$
where $g$ is the genus of~$P$,
$\tilde\de$ is the $m$-Arf invariant of~$\arf|_{P_1}$,
$k^0_j$ is the number of ovals in the chosen similarity class with value of~$\arf$ equal to~$j\cdot m/2$
and $k^1_j=k_j-k_j^0$ is the number of ovals in the other similarity class with value of~$\arf$ equal to~$j\cdot m/2$.
(The invariants~$k_j^i$ are defined up to the swap $k_j^i\leftrightarrow k_j^{1-i}$.)
\end{mydef}

\begin{mydef}
Let $(P,\tau)$ be a Klein surface of type $(g,k,\ve)$.
Let $m$ be odd.
The {\it topological type\/} of a real $m$-Arf function~$\arf$ on~$(P,\tau)$ is a tuple $(g,k)$,
where $g$ is the genus of~$P$ and $k$ is the number of ovals of~$(P,\tau)$.
\end{mydef}

\begin{prop}
\label{topinv-nec}
If there exists a real $m$-Arf function of topological type $t$ on a Klein surface of type~$(g,k,\ve)$, $g\ge2$,
then $t$ satisfies the following conditions:
\begin{enumerate}[1)]
\item
Case $\ve=0$, $m\equiv0~(\mod2)$, $t=(g,\de,k_0,k_1)$: $k_1\cdot m/2\equiv1-g~(\mod m)$.
\item
Case $\ve=1$, $m\equiv0~(\mod2)$, $t=(g,\tilde\de,k^0_0,k^0_1,k^1_0,k^1_1)$:
Let $k_j=k_j^0+k_j^1$, $j=0,1$.
\begin{enumerate}[(a)]
\item
If $g>k+1$ and $m\equiv0~(\mod4)$ then $\tilde\de=0$.
\item
If $g>k+1$ and $k_0\ne0$ then $\tilde\de=0$.
\item
If $g=k+1$ and $m\equiv0~(\mod4)$ then $\tilde\de=1$.
\item
If $g=k+1$ and $k_0\ne0$ then $\tilde\de=1$.
\item
If $g=k+1$, $m\equiv2~(\mod4)$ and $k_0=0$ then $\tilde\de\in\{1,2\}$.
\item
$k_1\cdot m/2\equiv1-g~(\mod m)$.
\end{enumerate}
\item
Case $m\equiv1~(\mod2)$, $t=(g,k)$: $g\equiv1~(\mod m)$.
\end{enumerate}
\end{prop}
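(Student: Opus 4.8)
The strategy is to reduce all of the stated conditions to three well-understood inputs: the degree obstruction for $m$-spin bundles on the closed Riemann surface~$P$, the reality relations $\arf(\tau c)=-\arf(c)$ for all~$c$ and $\arf(c)=0$ for every twist~$c$, and --- for the conditions involving $\tilde\de$ --- the classification of $2$-Arf functions recalled in~\cite{Nbook} together with the enumeration of real $m$-Arf functions on symmetric generating sets from~\cite{NP:2016} (Theorems~4{.}9 and~4{.}10). Fix a real $m$-Arf function~$\arf$ of topological type~$t$ on~$(P,\tau)$ and a symmetric generating set of $\piorb(P)$ of the kind used in section~\ref{sec-classification}. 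Since the invariants entering~$t$ are invariant under topological equivalence, it suffices to exhibit the asserted relations among the values of~$\arf$ on the ovals, the twists and the handle generators of this set; all of these relations follow from the defining properties of (real) $m$-Arf functions applied to the standard surface relation of the generating set.

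\textbf{The congruences (case~3, and the relations involving $k_1$ in cases~1 and~2).} Since $\arf$ is in particular an $m$-Arf function on~$P$, it corresponds to an $m$-spin bundle on~$P$, and such a bundle exists only if $m$ divides $2g-2$; when $m$ is odd this is exactly $g\equiv1~(\mod m)$, which is condition~3. For the oval congruence, note that an oval~$c$ is fixed pointwise by~$\tau$, so $\tau c=c$ and reality gives $\arf(c)=-\arf(c)$, whence $\arf(c)\in\{0,m/2\}$ and in particular $\arf(c)$ is independent of the orientation of~$c$; by definition exactly~$k_1$ of the $k$ ovals carry the value~$m/2$. Evaluating~$\arf$ on the standard relation of the symmetric generating set --- of the form (product of handle commutators)$\,\cdot\,$(product of twists)$\,\cdot\,$(product of ovals)$=1$ --- and using the defect formula for $m$-Arf functions from~\cite{NP:2005, NP:2009} together with $\arf(\text{twist})=0$, one obtains $\sum_{\text{ovals}}\arf(c)\equiv\chi(P/\<\tau\>)~(\mod m)$. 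As $\chi(P/\<\tau\>)=\chi(P)/2=1-g$ and the left-hand side equals $k_1\cdot m/2$, this gives $k_1\cdot m/2\equiv1-g~(\mod m)$, i.e.\ condition~1 and condition~2(f). (In the separating case the surface $P/\<\tau\>$ is a copy of the closure of~$P_1$ and the computation can be carried out directly on~$P_1$; in the non-separating case one works on~$P$ cut along the ovals, where the extra boundary circles produced by the cut are interchanged in pairs by~$\tau$ and their contributions cancel, leaving the same congruence.)

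\textbf{The conditions on $\tilde\de$ (case~2).} Assume $\ve=1$. Reducing modulo~$2$, $\bar\arf:=(\arf~(\mod2))$ is a real $2$-Arf function on~$(P,\tau)$, the similarity classes of ovals for~$\arf$ and for~$\bar\arf$ coincide (Proposition~\ref{similarity}), and $\tilde\de$ reduces modulo~$2$ to the $2$-Arf invariant of~$\bar\arf|_{P_1}$. The surface~$P_1$ has genus $h_1=(g+1-k)/2$, so $g>k+1$ corresponds to $h_1\ge2$ and $g=k+1$ to $h_1=1$. If some oval is trivial for~$\arf$, i.e.\ $k_0\ne0$, or if $4\mid m$ --- in which case $m/2$ is even and hence $\bar\arf$ vanishes on \emph{every} oval --- then $\bar\arf|_{P_1}$ is a $2$-Arf function all of whose boundary values are~$0$, and the classification in~\cite{Nbook} forces its $2$-Arf invariant to equal~$0$ when $h_1\ge2$ and~$1$ when $h_1=1$; this yields conditions 2(a)--2(d). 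Finally, when $g=k+1$ (so $h_1=1$), $m\equiv2~(\mod4)$ and $k_0=0$, the restrictions $\arf|_{P_1}$ that arise from real $m$-Arf functions are classified by Theorems~4{.}9 and~4{.}10 of~\cite{NP:2016}: there are exactly two inequivalent possibilities, which are labelled $\tilde\de=1$ and $\tilde\de=2$ (the value $\tilde\de=0$ being excluded), giving condition~2(e).

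\textbf{Expected main difficulty.} The routine part is the block of congruences in cases~1, 2(f) and~3: these follow mechanically once the defect formula for $m$-Arf functions is applied to the standard relation of the generating set. The delicate part is case~2, namely determining the exact set of admissible values of~$\tilde\de$. One must keep careful track of how the $\z/m\z$-valued data carried by the ovals and handles refines the mod-$2$ picture --- in particular of the dichotomy between $4\mid m$ and $m\equiv2~(\mod4)$, which decides whether the ovals are visible to the reduction~$\bar\arf$ at all --- and one must treat the exceptional genus-one case~2(e) separately, since there $\tilde\de$ is no longer merely a mod-$2$ invariant and one has to invoke the full enumeration of real $m$-Arf functions from~\cite{NP:2016}.
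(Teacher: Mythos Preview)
Your handling of the congruences (cases~1, 2(f) and~3) is essentially the same as the paper's: you appeal to the relation $\sum_i\arf(c_i)\equiv1-g~(\mod m)$ on the ovals, which is exactly Theorem~4{.}9(1)/4{.}10(1) of~\cite{NP:2016}, and for odd~$m$ to the divisibility $m\mid 2g-2$. No objection there.

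The treatment of the $\tilde\de$ conditions has two problems. First, a misstatement: when $k_0\ne0$ but $m\equiv2~(\mod4)$ and $k_1>0$, the reduction $\bar\arf|_{P_1}$ does \emph{not} vanish on every oval --- the $k_1$ ovals carry the value $m/2\equiv1~(\mod2)$. What is true is that \emph{some} boundary value is even, and that alone already forces the Arf invariant to vanish when $\tilde g\ge2$; this is the content of Theorem~4{.}3(b) in~\cite{NP:2016}, which the paper quotes directly. So your argument for 2(a)--2(b) is salvageable but misstated.

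The more serious gap is in the genus-one case $g=k+1$ (conditions 2(c)--2(e)). There $\tilde\de$ is \emph{not} a $\z/2\z$-valued parity: by definition it is the divisor $\gcd(m,\al_1,\be_1,\ga_1+1,\dots,\ga_k+1)$, which in principle can be any divisor of~$m$. Your mod-$2$ reduction only tells you that the $2$-Arf invariant of $\bar\arf|_{P_1}$ is~$1$, i.e.\ that $\tilde\de$ is odd; it does not by itself force $\tilde\de=1$. The paper instead works directly with the gcd (Theorem~4{.}3(c) in~\cite{NP:2016}): if $k_0\ne0$ then some $\ga_i+1=1$ sits in the gcd, giving $\tilde\de=1$; if $k_0=0$ one computes $\gcd(m,m/2+1)$, which equals~$1$ when $4\mid m$ and~$2$ when $m\equiv2~(\mod4)$, yielding 2(c)--2(e). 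You need this explicit gcd computation; the passage through $\bar\arf$ is not enough in genus one.
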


\begin{proof}
Let $(P,\tau)$ be a Klein surface of type~$(g,k,\ve)$, $g\ge2$.
Let $\arf$ be a real $m$-Arf function of topological type $t$ on $(P,\tau)$.
Let $c_1,\dots,c_k$ be the ovals of $(P,\tau)$.
\begin{enumerate}[1)]
\item
Case $\ve=0$, $m\equiv0~(\mod2)$, $t=(g,\de,k_0,k_1)$:
By definition of $k_j$,
the tuple $(\arf(c_1),\dots,\arf(c_k))$ is a permutation of zero repeated $k_0$ times and $m/2$ repeated $k_1$ times, hence $\sum\limits_{i=1}^k\,\arf(c_i)\equiv k_1\cdot m/2~(\mod m)$.
On the other hand Theorem~4{.}9(1) in~\cite{NP:2016} implies $\sum\limits_{i=1}^k\,\arf(c_i)\equiv1-g~(\mod m)$.
Hence $k_1\cdot m/2\equiv1-g~(\mod m)$.
\item
Case $\ve=1$, $m\equiv0~(\mod2)$, $t=(g,\tilde\de,k^0_0,k^0_1,k^1_0,k^1_1)$:
Let $P_1$ and~$P_2$ be the connected components of $P\backslash P^{\tau}$.
Each of these components is a surface of genus $\tilde g=(g+1-k)/2$ with $k$~holes.
If $\arf$ is a real $m$-Arf function of topological type $(g,\tilde\de,k^0_0,k^0_1,k^1_0,k^1_1)$ on $(P,\tau)$,
then $\arf|_{P_1}$ is an $m$-Arf function on a surface of genus $\tilde g$ with $k$~holes
with values on the holes equal to zero repeated $k_0$ times and $m/2$ repeated $k_1$ times. 
\begin{enumerate}[$\bullet$]
\item
Theorem~4{.}3(b) in~\cite{NP:2016} implies that if $\tilde g>1$ and $\arf(c_i)\equiv0~(\mod2)$ for some~$i$ then $\tilde\de=0$.
Note that $\tilde g>1$ if and only if $g>k+1$.
If $m\equiv0~(\mod4)$ then all $\arf(c_i)$ are even since both $0$ and $m/2$ are even, therefore $\tilde\de=0$.
If $k_0\ne0$ then $\arf(c_i)=0$ for some~$i$, hence $\arf(c_i)$ is even for some~$i$, therefore $\tilde\de=0$.
However, if $m\equiv2~(\mod4)$ and $k_0=0$ then all $\arf(c_i)=m/2$ are odd, hence no conclusion can be made about~$\tilde\de$. 
Thus we can rewrite the condition as follows:
If $g>k+1$ and ($m\equiv0~(\mod4)$ or $k_0\ne0$) then $\tilde\de=0$.
\item
Theorem~4{.}3(c) in~\cite{NP:2016} implies that in the case~$\tilde g=1$
the Arf invariant~$\tilde\de$ is a divisor of $\gcd(m,\arf(c_1)+1,\dots,\arf(c_k)+1)$.
Note that $\tilde g=1$ if and only if $g=k+1$.
If $k_0\ne0$ then $\arf(c_i)=0$ for some~$i$, hence $\tilde\de$ is a divisor of $\gcd(m,1,\dots)$, therefore $\tilde\de=1$.
If $k_0=0$ then $\arf(c_i)=m/2$ for all~$i$, hence $\tilde\de$ is a divisor of $\gcd\left(m,\frac{m}{2}+1\right)$.
For $m\equiv0~(\mod4)$ we have $\gcd\left(m,\frac{m}{2}+1\right)=1$, hence $\tilde\de=1$.
For $m\equiv2~(\mod4)$ we have $\gcd\left(m,\frac{m}{2}+1\right)=2$, hence $\tilde\de\in\{1,2\}$.
Therefore we can rewrite the condition as follows:
If $g=k+1$ and ($m\equiv0~(\mod4)$ or $k_0\ne0$) then $\tilde\de=1$.
If $g=k+1$, $m\equiv2~(\mod4)$ and $k_0=0$ then $\tilde\de\in\{1,2\}$.
\item
Theorem~4{.}3(d) in~\cite{NP:2016} implies that $\arf(c_1)+\cdots+\arf(c_k)\equiv(2-2\tilde g)-k~(\mod m)$.
Note that $\arf(c_1)+\cdots+\arf(c_k)=k_1\cdot m/2$ and $(2-2\tilde g)-k=1-g$.
Hence we can rewrite the condition as follows:
$k_1\cdot m/2\equiv1-g~(\mod m)$.
\end{enumerate}
\item
Case $m\equiv1~(\mod2)$, $t=(g,k)$:
Theorem~4{.}10(1) in~\cite{NP:2016} implies $g\equiv1~(\mod m)$.
\end{enumerate}
\end{proof}


\begin{prop}
Let $(P,\tau)$ be a Klein surface of type~$(g,k,1)$, $g\ge2$, and let $m$ be even.
Let $\arf$ be an $m$-Arf function of type $(g,\tilde\de,k^0_0,k^0_1,k^1_0,k^1_1)$ on $(P,\tau)$.
Then the Arf invariant $\de\in\{0,1\}$ of~$\arf$ is given by
\begin{align*}
  &\de\equiv k_0^0\equiv k_0^1~(\mod2)\quad\text{if}\quad m\equiv2~(\mod4),\\
  &\de\equiv k_0^0+k_1^0\equiv k_0^1+k_1^1~(\mod2)\quad\text{if}\quad m\equiv0~(\mod4).
\end{align*}
\end{prop}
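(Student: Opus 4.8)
The plan is to relate the $m$-Arf invariant $\tilde\de$ of the restriction $\arf|_{P_1}$, which is defined via the formula for $m$-Arf invariants in \cite{NP:2016}, to the classical $2$-Arf invariant $\de$ of the reduction $(\arf\pmod 2)$. First I would recall, as already noted in the excerpt, that $(\arf\pmod2)$ is a real $2$-Arf function on $(P,\tau)$, so its Arf invariant $\de\in\{0,1\}$ is defined; moreover by the reduction property the parity of $\tilde\de$ and of $\de$ should already be linked, and for $m\equiv2\pmod4$ the $2$-reduction converts the value $m/2$ on an oval into $1$ while $0$ stays $0$, whereas for $m\equiv0\pmod4$ both $0$ and $m/2$ reduce to $0$. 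So the proof splits into these two congruence classes of $m$ from the outset.

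Next I would set up a symmetric generating set of $\piorb(P)$ adapted to the separating involution $\tau$, in the form used in \cite{NP:2016} (the same data underlying Theorems~4.9 and~4.10 there), so that $P_1$ has a standard generating system consisting of $\tilde g=(g+1-k)/2$ pairs of handle generators $a_i,b_i$ together with the $k$ boundary curves $c_1,\dots,c_k$, with one relation expressing the product of commutators and boundary curves. On such a canonical system the classical formula for the $2$-Arf invariant is $\de=\sum_{i=1}^{\tilde g}(\arf(a_i)+1)(\arf(b_i)+1)\pmod 2$ evaluated for $(\arf\pmod2)$, while the higher $m$-Arf invariant $\tilde\de$ of $\arf|_{P_1}$ is computed from the values of $\arf$ on the $a_i,b_i$ and $c_i$ by the corresponding recipe from \cite{NP:2016}. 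The key computation is then to track how the similarity classes $K^0,K^1$ enter: moving a boundary curve $c_j$ past a handle or past another boundary curve in the surface $P_1$ changes the generating set and hence the naive sum, and the parity of these changes is exactly controlled by whether $c_j$ lies in the chosen similarity class, contributing $k_0^0$ (resp.\ $k_0^0+k_1^0$) modulo $2$. Running the same bookkeeping from the point of view of the complementary component $P_2$, whose generating curves are the $\tau$-images, gives the symmetric expression in $k_0^1$ (resp.\ $k_0^1+k_1^1$), and since $\de$ is an invariant of $(P,\tau)$ not depending on which side we use, the two expressions must agree modulo $2$, which yields the stated chain of congruences.

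The main obstacle I expect is the precise normalisation: making the geometric reduction from an arbitrary symmetric generating set to the canonical form of \cite{NP:2016} while correctly counting, modulo $2$, the contributions of the oval-moving operations and verifying that the terms which survive are precisely $k_0^j$ when $m\equiv2\pmod4$ and $k_0^j+k_1^j$ when $m\equiv0\pmod4$. This is essentially a careful application of the transformation rules for $m$-Arf functions under changes of generators (Definition~3.4 and the surrounding lemmas in \cite{NP:2016}), combined with the fact that $\de$ is already known to be well-defined; once the bookkeeping is in place, the equality of the two sides and the agreement with $\de$ follow formally.
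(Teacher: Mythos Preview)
Your plan has a genuine gap: you never bring in the bridge generators $d_1,\dots,d_{k-1}$ of the symmetric generating set, and these are exactly where both the Arf invariant~$\de$ and the similarity classes live. The formula you quote, $\de\equiv\sum_{i=1}^{\tilde g}(\arf(a_i)+1)(\arf(b_i)+1)\pmod2$, computes (at best) the parity of~$\tilde\de$, the Arf invariant of the restriction $\arf|_{P_1}$; it does \emph{not} compute the Arf invariant~$\de$ of~$\arf$ on the closed surface~$P$. Indeed, a symmetric generating set of~$\piorb(P)$ has $g=2\tilde g+(k-1)$ symplectic pairs, and since $\al_i'=\al_i$, $\be_i'=\be_i$ for a real Arf function, the two blocks of handle contributions cancel modulo~$2$. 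What remains is precisely
\[
  \de\equiv\sum_{i=1}^{k-1}(1-\ga_i)(1-\de_i)\pmod2,
\]
which is the content of Theorem~4.9(4) in \cite{NP:2016} and is the formula the paper actually uses. The bridges~$d_i$ are essential here and are absent from your outline.

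The second missing ingredient is how the similarity decomposition enters. It is not obtained by ``moving boundary curves past handles'' in~$P_1$; rather, by Definition~\ref{def-similar} the similarity class of the oval~$c_i$ (relative to~$c_k$) is read off directly from the parity of $\de_i=\arf(d_i)$, since $d_i$ is a bridge between $c_i$ and~$c_k$. With this in hand the sum above is evaluated by a straightforward parity count: for $m\equiv2\pmod4$ the factor $(1-\ga_i)$ is odd exactly when $\ga_i=0$, so only the $k_0$ ovals with $\ga_i=0$ contribute, and among those the factor $(1-\de_i)$ is odd precisely for the $k_0^1$ ovals in the opposite similarity class; for $m\equiv0\pmod4$ the factor $(1-\ga_i)$ is always odd, so the sum counts all ovals with $\de_i$ even, giving $k_0^1+k_1^1$. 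Finally, the congruences $k_0^0\equiv k_0^1$ and $k_0^0+k_1^0\equiv k_0^1+k_1^1$ do not come from a $P_1$/$P_2$ symmetry argument but from combining Weichold's relation $k\equiv g+1\pmod2$ with the constraint $k_1\cdot m/2\equiv 1-g\pmod m$ of Proposition~\ref{topinv-nec}. Your proposal, as written, would need to be reworked around the bridge generators to reach the result.
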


\begin{proof}
Consider an $m$-Arf function~$\arf$ of type $(g,\tilde\de,k^0_0,k^0_1,k^1_0,k^1_1)$ on $(P,\tau)$.
Let $c_1,\dots,c_k$ be the ovals of $(P,\tau)$.
We choose a symmetric generating set
$$
  \calB
  =
  (
   a_1,b_1,\dots,a_{\tilde g},b_{\tilde g},
   a_1',b_1',\dots,a_{\tilde g}',b_{\tilde g}',
   c_1,\dots,c_{k-1},d_1,\dots,d_{k-1}
  ).
$$
of~$\piorb(P)$.
Set $\ga_i=\arf(c_i)$ for $i=1,\dots,k$ and $\de_i=\arf(d_i)$ for $i=1,\dots,k-1$.
We can assume without loss of generality that the oval~$c_k$ is in the chosen similarity class (see Definition~\ref{def-similar}).
Let $\de_k=1$.
For $\al,\be\in\{0,1\}$ let $A_{\al}^{\be}$ be the subsets of $\{1,\dots,k\}$ given by
$$A_{\al}^{\be}=\{i\st\ga_i=\al\cdot m/2,\de_i\equiv1-\be~(\mod2)\}.$$
Then $k\in A_0^0\cup A_1^0$.
Note that $|A_{\al}^{\be}|=k_{\al}^{\be}$.
According to Theorem~4{.}9(4) in~\cite{NP:2016},
the Arf invariant~$\de$ of~$\arf$ is given by
$$\de\equiv\sum\limits_{i=1}^{k-1}(1-\ga_i)(1-\de_i)~(\mod2).$$
Weichold's classification of Klein surfaces implies $k\equiv g+1~(\mod2)$.
If~$m\equiv2~(\mod4)$, then
$$
  \sum\limits_{i=1}^{k-1}(1-\ga_i)(1-\de_i)
  \equiv|A_0^1\cap\{1,\dots,k-1\}|
  \equiv|A_0^1|
  \equiv k_0^1~(\mod2).
$$
In this case $m/2$ is odd, hence condition $k_1\cdot m/2\equiv1-g~(\mod m)$
can be reduced modulo~$2$ to $k_1\equiv1-g~(\mod2)$.
Using $k\equiv g+1~(\mod2)$ we obtain
$$k_0=k-k_1\equiv(g+1)-(1-g)\equiv0~(\mod2),$$
i.e.
$$k_0^1=k_0-k_0^0\equiv k_0^0~(\mod2).$$
If~$m\equiv0~(\mod4)$, then
$$
  \sum\limits_{i=1}^{k-1}(1-\ga_i)(1-\de_i)
  \equiv|(A_0^1\cup A_1^1)\cap\{1,\dots,k-1\}|
  \equiv|A_0^1\cup A_1^1|
  \equiv k_0^1+k_1^1~(\mod2).
$$
In this case $m/2$ is even, hence condition $k_1\cdot m/2\equiv1-g~(\mod m)$
can be reduced modulo~$2$ to $0\equiv1-g(\mod2)$,
hence $g$ is odd, so that $k\equiv g+1~(\mod2)$ is even.
Therefore
$$k_0^1+k_1^1=k-(k_0^0+k_1^0)\equiv k_0^0+k_1^0~(\mod2).$$
\end{proof}

\subsection{Canonical Symmetric Generating Sets}

\label{sec-canon}

For a Klein surface~$(P,\tau)$ we introduced in~\cite{NP:2016}
symmetric generating sets of $\piorb(P)$. 
These generating sets have certain symmetry with respect to the action of~$\tau$.
In this section we will construct for any real $m$-Arf function~$\arf$ a standard generating set of~$\piorb(P)$
on which $\arf$ assumes prescribed values determined by the topological invariants of~$\arf$.
We will call such a generating set canonical for~$\arf$.
For the convenience of the reader we will first recall the definition of a standard generating set.
The following fact is well known, see for example~\cite{Nbook,N1975,N1978moduli} and~\cite{BEGG}:

\begin{prop}
\label{prop-two-halves}
Let $(P,\tau)$ be a Klein surface of topological type~$(g,k,\ve)$.
Let $c_1,\dots,c_k$ be the ovals of~$(P,\tau)$.
In the case~$\ve=0$ we can choose for any $n$ with $k+1\le n\le g+1$ and $n\equiv g+1~(\mod2)$
twists $c_{k+1},\dots,c_n$ such that the complement of the curves $c_1,\dots,c_n$ in~$P$
consists of two components~$P_1$ and~$P_2$.
In the case~$\ve=1$ we can take $n=k$.
Each of the components~$P_1$ and~$P_2$ is a surface of genus $\tilde g=(g+1-n)/2$ with $n$~holes.
We will refer to $P_1$ and $P_2$ as a {\it decomposition of}~$(P,\tau)$ {\it in two halves}.
Note that such a decomposition is unique if $(P,\tau)$ is separating,
but is not unique if $(P,\tau)$ is non-separating since the twists $c_{k+1},\dots,c_n$ can be chosen in different ways.
\end{prop}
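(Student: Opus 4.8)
\emph{Proof proposal.}
The statement is purely topological, so I would forget the complex structure and treat $(P,\tau)$ as a compact orientable surface with an orientation-reversing involution, transferring the question to the quotient $Q=P/\<\tau\>$. The projection $\pi\colon P\to Q$ restricts to a double covering $P\bs P^{\tau}\to\mathrm{int}(Q)$ with deck transformation $\tau$; since $P$ is orientable and $\tau$ reverses orientation, this is precisely the orientation double covering of $\mathrm{int}(Q)$. Hence $\mathrm{int}(Q)$ is orientable iff $P\bs P^{\tau}$ is disconnected, i.e.\ iff $\ve=1$, and $Q$ is a compact surface with $k$ boundary circles (the ovals) and, by a straightforward Euler characteristic count, $\chi(Q)=1-g$. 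From here the two cases are treated separately.

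\textbf{Separating case ($\ve=1$), $n=k$.} Here $P\bs P^{\tau}$ already consists of two components $P_1,P_2$; $\tau$ must interchange them, since a $\tau$-invariant connected subsurface of $P\bs P^{\tau}$ would be mapped to itself by the orientation-reversing $\tau$ and hence would contain fixed points. A collar argument at the ovals shows each $\overline{P_i}$ is a compact surface with $k$ boundary circles, orientable as a subsurface of $P$, and with $\chi(\overline{P_i})=\tfrac12\chi(P)=1-g$, forcing genus $\tilde g=(g+1-k)/2$, an integer by Weichold's parity condition. The same reasoning shows that in this case every $\tau$-invariant simple closed curve is an oval, so there are no twists and the decomposition is forced, which gives the asserted uniqueness.

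\textbf{Non-separating case ($\ve=0$).} Now $Q$ is non-orientable, hence a sphere with $h=g+1-k$ cross-caps and $k$ holes, with $h\ge1$. A twist in $P$ is exactly the $\pi$-preimage of a one-sided simple closed curve in $\mathrm{int}(Q)$ (that is what makes the preimage connected), and cutting $Q$ along a one-sided curve keeps $\chi$ fixed, adds one boundary circle, and lowers the non-orientable genus. The crux is the following flexibility lemma, which I would prove from the cross-cap model: for every $\ell$ with $1\le\ell\le h$ and $\ell\equiv h\pmod2$ one can choose $\ell$ pairwise disjoint one-sided simple closed curves in $\mathrm{int}(Q)$ whose complement, after cutting, is orientable. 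Concretely, cut along $\ell-1$ distinct cross-cap cores to reach a sphere with $h-\ell+1$ cross-caps and some holes; since $h-\ell+1$ is odd and positive, this surface is $N_1\#\Sigma_{(h-\ell)/2}$ with holes, and one further cut along the core of its Möbius summand orients it. Taking $c_{k+1},\dots,c_n$ to be the $\pi$-preimages of these $\ell=n-k$ curves makes $n$ range exactly over $k+1\le n\le g+1$ with $n\equiv g+1\pmod2$; the cut-open surface $Q'$ is then connected and orientable with $n$ boundary circles and $\chi(Q')=1-g$, so $\pi^{-1}(Q')=P\bs(c_1\cup\dots\cup c_n)$ splits into two copies $P_1,P_2$ interchanged by $\tau$, each of genus $\tilde g=(g+1-n)/2$ with $n$ holes. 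Distinct admissible choices of the twists (in particular distinct values of $n$) yield genuinely different decompositions, which is the non-uniqueness claim.

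\textbf{Main obstacle.} Everything reduces to bookkeeping with the orientation double covering and Euler characteristics except the flexibility lemma for cutting a non-orientable surface along one-sided curves. The subtle point there is the parity: a single cut changes orientability only when the current non-orientable genus is odd, and it is precisely this that makes the condition $n\equiv g+1\pmod2$ simultaneously necessary and achievable. Verifying that the curves produced at the successive stages can be realised disjointly inside the original $Q$, and hence as honest twists in $P$, is where I would be most careful.
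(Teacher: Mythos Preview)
The paper does not actually prove this proposition: it is stated as a well-known fact with references to \cite{Nbook,N1975,N1978moduli} and \cite{BEGG}, so there is nothing to compare your argument against line by line. Your proposal is correct and self-contained. Passing to the quotient $Q=P/\langle\tau\rangle$ and identifying $P\setminus P^{\tau}\to\mathrm{int}(Q)$ with the orientation double cover is exactly the right framework; it immediately explains why $\ve=1$ forces $Q$ orientable and gives the two halves for free, and in the non-separating case it translates the search for twists into the search for disjoint one-sided curves in a non-orientable surface whose complement becomes orientable. Your flexibility lemma and its proof via successive cross-cap cores followed by one cut along the core of the residual $N_1$-summand in $N_{h-\ell+1}\cong N_1\#\Sigma_{(h-\ell)/2}$ is the standard argument, and the Euler-characteristic bookkeeping $2\tilde g=h-\ell$ simultaneously shows that the parity constraint $n\equiv g+1\pmod2$ is necessary and that the resulting halves have the asserted genus. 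The disjointness worry you flag in your last paragraph is not a real obstacle: each successive cut takes place in the interior of the surface obtained from the previous cuts, which sits as an open subsurface of the original $Q$, so the curves are automatically disjoint there.
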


\begin{mydef}
\label{def-bridge}
Let $(P,\tau)$ be a Klein surface and $c_1,\dots,c_n$ invariant closed curves as in Proposition~\ref{prop-two-halves}
such that the complement of the curves $c_1,\dots,c_n$ in~$P$ consists of two components~$P_1$ and~$P_2$.
For two invariant closed curves~$c_i$ and~$c_j$, a {\it bridge} between~$c_i$ and~$c_j$ is a curve of the form
$$r_i\cup(\tau\ell)^{-1}\cup r_j\cup\ell,$$
where:
\begin{enumerate}[$\bullet$]
\item
$\ell$ is a simple path in~$P_1$ starting on~$c_j$ and ending on~$c_i$.
\item
$r_i$ is the path along~$c_i$ from the end point of~$\ell$ to the end point of~$\tau\ell$.
(If $c_i$ is an oval then the path $r_i$ consists of one point.)
\item
$r_j$ is the path along~$c_j$ from the starting point of~$\tau\ell$ to the starting point of~$\ell$.
(If $c_j$ is an oval then the path $r_j$ consists of one point.)
\end{enumerate}
Figure~\ref{fig-bridges} shows the shapes of the bridges for different types of invariant curves.
The bridges are shown in bold.
The bold arrows on the bold lines show the direction of the bridges,
while the thinner arrows near the lines show the directions of the paths $c_i$, $c_j$, $r_i$, $r_j$, $\ell$ and $\tau\ell$. 
\end{mydef}

\begin{figure}[H]
\begin{center}
\leavevmode
  \setcoordinatesystem units <1cm,1cm> point at 0 0
  \setplotarea x from -8 to 8, y from -4 to 3
  \plot -6 -4 6 -4 6 3 -6 3 -6 -4 /
  \plot -2.5 -4 -2.5 3 /
  \plot 1.5 -4 1.5 3 /
  \ellipticalarc axes ratio 1:2 360 degrees from -4.25 1 center at -4.25 1.5
  \arrow <10pt> [0.2,0.5] from -4.7 1.7 to -4.7 1.3
  \ellipticalarc axes ratio 1:2 360 degrees from -4.25 -1 center at -4.25 -1.5
  \arrow <10pt> [0.2,0.5] from -4.7 -1.3 to -4.7 -1.7
  \arrow <10pt> [0.2,0.5] from -5.15 -0.2 to -5.15 0.2
  \arrow <10pt> [0.2,0.5] from -3.35 -0.2 to -3.35 0.2
  \put {$c_i$} [r] <-5pt,0pt> at -4.7 1.5
  \put {$c_j$} [r] <-5pt,0pt> at -4.7 -1.5
  \put {$r_i$} [l] <10pt,0pt> at -4.25 1
  \put {$r_j$} [l] <10pt,0pt> at -4.25 -1
  \put {$\ell$} [r] <-5pt,0pt> at -5.15 0
  \put {$\tau\ell$} [l] <5pt,0pt> at -3.35 0
  \put {$c_i,c_j$~\text{ovals}} [c] at -4.25 -3
  \ellipticalarc axes ratio 1:2 360 degrees from -1 1 center at -1 1.5
  \arrow <10pt> [0.2,0.5] from -1.45 1.7 to -1.45 1.3
  \ellipticalarc axes ratio 1:2 360 degrees from -1 -1 center at -1 -1.5
  \arrow <10pt> [0.2,0.5] from -1.45 -1.3 to -1.45 -1.7
  \arrow <10pt> [0.2,0.5] from -1.9 -0.2 to -1.9 0.2
  \arrow <10pt> [0.2,0.5] from 0.8 -0.7 to 0.8 -0.3
  \arrow <10pt> [0.2,0.5] from -0.55 -1.7 to -0.55 -1.3
  \put {$c_i$} [r] <-5pt,0pt> at -1.45 1.5
  \put {$c_j$} [r] <-5pt,0pt> at -1.45 -1.5
  \put {$r_i$} [l] <10pt,4pt> at -1 1
  \put {$r_j$} [l] <5pt,0pt> at -0.55 -1.5
  \put {$\ell$} [r] <-5pt,0pt> at -1.9 0
  \put {$\tau\ell$} [l] <5pt,0pt> at 0.8 -0.5
  \put {$c_i$~\text{oval}, $c_j$~\text{twist}} [c] at -0.5 -3
  \ellipticalarc axes ratio 1:2 360 degrees from 3 1 center at 3 1.5
  \ellipticalarc axes ratio 1:2 360 degrees from 3 -1 center at 3 -1.5
  \arrow <10pt> [0.2,0.5] from 2.55 1.7 to 2.55 1.3
  \arrow <10pt> [0.2,0.5] from 2.55 -1.3 to 2.55 -1.7
  \arrow <10pt> [0.2,0.5] from 2.1 -0.2 to 2.1 0.2
  \arrow <10pt> [0.2,0.5] from 5.4 -0.2 to 5.4 0.2
  \arrow <10pt> [0.2,0.5] from 3.45 -1.7 to 3.45 -1.3
  \arrow <10pt> [0.2,0.5] from 3.45 1.3 to 3.45 1.7
  \put {$c_i$} [r] <-5pt,0pt> at 2.45 1.5
  \put {$c_j$} [r] <-5pt,0pt> at 2.45 -1.5
  \put {$\ell$} [r] <-5pt,0pt> at 2.1 0
  \put {$\tau\ell$} [l] <5pt,0pt> at 5.4 0
  \put {$r_i$} [l] <5pt,0pt> at 3.45 1.5
  \put {$r_j$} [l] <5pt,0pt> at 3.45 -1.5
  \put {$c_i,c_j$~\text{twists}} [c] at 3.75 -3
  \setplotsymbol({$\bullet$})
  \ellipticalarc axes ratio 1:2 360 degrees from -4.25 1 center at -4.25 0
  \arrow <10pt> [0.3,0.8] from -4.75 -0.3 to -4.75 0.3
  \arrow <10pt> [0.3,0.8] from -3.75 0.3 to -3.75 -0.3
  \ellipticalarc axes ratio 1:2 180 degrees from -1 1 center at -1 0
  \ellipticalarc axes ratio 1:1 180 degrees from -1 -2 center at -1 -0.5
  \ellipticalarc axes ratio 1:2 -180 degrees from -1 -1 center at -1 -1.5
  \arrow <10pt> [0.3,0.8] from -1.5 -0.2 to -1.5 0.2
  \arrow <10pt> [0.3,0.8] from 0.5 -0.3 to 0.5 -0.7
  \ellipticalarc axes ratio 1:2 180 degrees from 3 1 center at 3 0
  \ellipticalarc axes ratio 1:1 180 degrees from 3 -2 center at 3 0
  \ellipticalarc axes ratio 1:2 180 degrees from 3 1 center at 3 1.5
  \ellipticalarc axes ratio 1:2 -180 degrees from 3 -1 center at 3 -1.5
  \arrow <10pt> [0.3,0.8] from 2.5 -0.2 to 2.5 0.2
  \arrow <10pt> [0.3,0.8] from 5 0.2 to 5 -0.2
\end{center}
\caption{Bridges}
\label{fig-bridges}
\end{figure}

\begin{mydef}
\label{def-symm-gen}
Let $(P,\tau)$ be a Klein surface of topological type~$(g,k,\ve)$.
A {\it symmetric generating set} of~$\piorb(P)$ is a generating set of the form
$$
  (
   a_1,b_1,\dots,a_{\tilde g},b_{\tilde g},
   a_1',b_1',\dots,a_{\tilde g}',b_{\tilde g}',
   c_1,\dots,c_{n-1},d_1,\dots,d_{n-1}
  ),
$$
where
\begin{enumerate}[$\bullet$]
\item
$n=k$ if $\ve=1$.
\item
$k+1\le n\le g+1$ and $n\equiv g+1~(\mod2)$ if~$\ve=0$.
\item
$c_1,\dots,c_k$ are the ovals of~$(P,\tau)$.
\item
$c_{k+1},\dots,c_{n-1}$ are twists (in the case~$\ve=0$).
\item
There exists an invariant closed curve~$c_n$ such that the complement of the curves $c_1,\dots,c_n$ in~$P$
consists of two components~$P_1$ and~$P_2$.
The invariant curve~$c_n$ is an oval if~$\ve=1$ and a twist if~$\ve=0$.
\item
$(a_1,b_1,\dots,a_{\tilde g},b_{\tilde g},c_1,\dots,c_n)$ is a generating set of $\piorb(P_1)$.
\item
$a_i'=(\tau a_i)^{-1}$ and $b_i'=(\tau b_i)^{-1}$ for $i=1,\dots,\tilde g$.
\item
$d_1,\dots,d_{n-1}$ are closed curves which only intersect at the base point,
such that $d_i$ is homotopic to a bridge between $c_i$ and $c_n$,
\end{enumerate}
Note that $\tau c_i=c_i$ and $\tau d_i=c_i^{|c_i|}d_i^{-1}c_n^{|c_n|}$,
where $|c_j|=0$ if $c_j$ is an oval and $|c_j|=1$ if $c_j$ is a twist.
\end{mydef}

\begin{mydef}
\label{def-canon}
Let $(P,\tau)$ be a Klein surface of type $(g,k,\ve)$, $g\ge2$,
and $\arf$ a real $m$-Arf function~$\arf$ of topological type $t$ on~$(P,\tau)$.
Let 
$$
  \calB
  =(a_1,b_1,\dots,a_{\tilde g},b_{\tilde g},a_1',b_1',\dots,a_{\tilde g}',b_{\tilde g}',
      c_1,\dots,c_{n-1},d_1 \dots,d_{n-1})
$$
be a symmetric generating set of~$\piorb(P)$ and $$\al_i=\arf(a_i),~\be_i=\arf(b_i),~\al_i'=\arf(a_i'),~\be_i'=\arf(b_i'),~\ga_i=\arf(c_i),~\de_i=\arf(d_i).$$
We say that~$\calB$ is {\it canonical} for the $m$-Arf function~$\arf$ if
\begin{enumerate}[$\bullet$]
\item
Case $\ve=0$, $m\equiv0~(\mod2)$, $t=(g,\de,k_0,k_1)$:
\begin{align*}
  &(\al_1,\be_1,\dots,\al_{\tilde g},\be_{\tilde g})
  =(\al_1',\be_1',\dots,\al_{\tilde g}',\be_{\tilde g}')
  =(0,1,1,\dots,1)~\text{if}~\tilde g\ge2,\\
  &(\al_1,\be_1)=(\al_1',\be_1')=(1,0)~\text{if}~\tilde g=1,\\
  &\ga_1=\cdots=\ga_{k_0}=0,\quad\ga_{k_0+1}=\cdots=\ga_k=m/2,\quad\ga_{k+1}=\cdots=\ga_{n-1}=0,\\
  &\de_1=\cdots=\de_{n-1}=1-\de.
\end{align*}
\item
Case $\ve=1$, $m\equiv0~(\mod2)$, $t=(g,\tilde\de,k^0_0,k^0_1,k^1_0,k^1_1)$:
\begin{align*}
  &(\al_1,\be_1,\dots,\al_{\tilde g},\be_{\tilde g})
  =(\al_1',\be_1',\dots,\al_{\tilde g}',\be_{\tilde g}')
  =(0,1-\tilde\de,1,\dots,1)~\text{if}~\tilde g\ge2;\\
  &(\al_1,\be_1)=(\al_1',\be_1')=(\tilde\de,0)~\text{if}~\tilde g=1;\\
  &\ga_1=\cdots=\ga_{k_0}=0,\quad\ga_{k_0+1}=\cdots=\ga_{k-1}=m/2;\\
  &\text{The oval}~c_k~\text{is in the chosen similarity class};\\
  &\de_1=\cdots=\de_{k_0^1}=0,\quad\de_{k_0^1+1}=\cdots=\de_{k_0}=1,\\
  &\de_{k_0+1}=\cdots=\de_{k_0+k_1^1}=0,\quad\de_{k_0+k_1^1+1}=\cdots=\de_{k-1}=1~\text{if}~k_1\ge1;\\
  &\de_1=\cdots=\de_{k_0^1}=0,\quad\de_{k_0^1+1}=\cdots=\de_{k-1}=1~\text{if}~k_1=0.
\end{align*}
\item
Case $m\equiv1~(\mod2)$, $t=(g,k)$:
\begin{align*}
  &(\al_1,\be_1,\dots,\al_{\tilde g},\be_{\tilde g})
  =(\al_1',\be_1',\dots,\al_{\tilde g}',\be_{\tilde g}')
  =(0,1,1,\dots,1)~\text{if}~\tilde g\ge2,\\
  &(\al_1,\be_1)=(\al_1',\be_1')=(1,0)~\text{if}~\tilde g=1,\\
  &\ga_1=\cdots=\ga_{n-1}=0,\\
  &\de_1=\cdots=\de_{n-1}=0.
\end{align*}
\end{enumerate}
\end{mydef}

\begin{lem}
\label{lem-modify-bridges}
Let $(P,\tau)$ be a Klein surface of type $(g,k,\ve)$, $g\ge2$.
Let the geometric genus of $(P,\tau)$ be positive, i.e.\ $k\le g-1$ if $\ve=1$ and $k\le g-2$ if $\ve=0$. 
In the case $\ve=1$ let $n=k$.
In the case $\ve=0$ we choose $n\in\{k+1,\dots,g-1\}$ such that $n\equiv g-1~(\mod2)$.
(The assumption that the geometric genus is positive implies $k+1\le g-1$, hence $\{k+1,\dots,g-1\}\ne\emptyset$.)
Let $c_1,\dots,c_n$ be invariant closed curves as in Proposition~\ref{prop-two-halves},
then bridges $d_1,\dots,d_{n-1}$ as in Definition~\ref{def-symm-gen} can be chosen in such a way that
\begin{enumerate}[(i)]
\item
If $m$ is odd, then $\arf(d_i)=0$ for~$i=1,\dots,n-1$.
\item
If $m$ is even and $(P,\tau)$ is separating, then $\arf(d_i)\in\{0,1\}$ for $i=1,\dots,n-1$.
\item
If $m$ is even and $(P,\tau)$ is non-separating, then $\arf(d_1)=\cdots=\arf(d_{n-1})\in\{0,1\}$.
\end{enumerate}
\end{lem}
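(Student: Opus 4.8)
The plan is to fix a decomposition $P=P_1\cup P_2$ with curves $c_1,\dots,c_n$ as in Proposition~\ref{prop-two-halves}, choosing (in the case $\ve=0$) the integer~$n$ so that $\tilde g=(g+1-n)/2\ge1$, which is possible because the geometric genus of~$(P,\tau)$ is positive, and then to start from arbitrary bridges $d_1,\dots,d_{n-1}$ as in Definition~\ref{def-symm-gen} and improve them one at a time. The basic observation is that any bridge between $c_i$ and~$c_n$ is obtained from~$d_i$ by replacing the simple path~$\ell\subset P_1$ of Definition~\ref{def-bridge} by another simple path in~$P_1$ with the same endpoints, which forces $\tau\ell$ to be replaced by its $\tau$-image; hence every such move is automatically $\tau$-equivariant and leaves $c_1,\dots,c_n$, all handle generators and all other bridges unchanged. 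I would use one kind of move: route~$\ell$ once around a handle of~$P_1$. Since $\tilde g\ge1$ such a handle exists, and one can find there a simple closed curve~$e\subset P_1$, disjoint from $c_1,\dots,c_n$ and from the bridges $d_j$ with $j\ne i$, meeting~$d_i$ once, with $\arf(e)$ a unit of~$\z/m\z$ (for instance $\arf(e)=1$); splicing a loop freely homotopic to~$e$ into~$\ell$ on the $P_1$-side simultaneously inserts~$\tau e$, traversed with the opposite orientation, on the $P_2$-side. Using the composition rules for $m$-Arf functions from Definition~3.4 in~\cite{NP:2016} together with $\arf(\tau e)=-\arf(e)$, I expect this move to change $\arf(d_i)$ by $2\arf(e)$ up to an even correction term coming from the Euler characteristics of the pairs of pants involved; thus it realizes an arbitrary shift of~$\arf(d_i)$ by an element of the subgroup $2\z/m\z\subseteq\z/m\z$, and it leaves every $\arf(d_j)$ with $j\ne i$ fixed. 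The factor~$2$ here is intrinsic: $\tau$-equivariance forces any such move to act symmetrically on the two halves, so the odd contributions cancel in pairs.

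With this in hand the three cases follow. For odd~$m$ one has $2\z/m\z=\z/m\z$, so each~$d_i$ can be replaced by a bridge with $\arf(d_i)=0$, giving~(i). For even~$m$ I would first pass to $\arf\bmod2$, a real $2$-Arf function whose values on the bridges are unaffected by the move above, since $2\arf(e)\equiv0~(\mod2)$. By the known classification of real $2$-Arf functions and their canonical generating sets~\cite{Nbook,N1990}, we may choose the bridges $d_1,\dots,d_{n-1}$ so that the parities $\arf(d_i)\bmod2$ are those of a canonical generating set for $\arf\bmod2$; in particular these parities are all equal when $(P,\tau)$ is non-separating, in accordance with the value $\de_1=\dots=\de_{n-1}=1-\de$ in Definition~\ref{def-canon}. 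Keeping this choice, apply the move above once more to each~$d_i$ to pull $\arf(d_i)$ down to the unique representative in~$\{0,1\}$ of its residue class modulo~$2$. The resulting bridges satisfy $\arf(d_i)\in\{0,1\}$ for all~$i$, and in the non-separating case all these values coincide, which is~(ii) and~(iii).

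The hard part will be twofold. First, and routine but laborious, is the precise effect of the elementary move on~$\arf(d_i)$: one must write the modified bridge explicitly as a product of simple closed curves in good position and evaluate~$\arf$ via the composition axioms, checking both that the correction term is even and that the algebraic intersection signs combine as claimed. Second, and genuinely delicate, is the existence of the auxiliary curve~$e$ with $\arf(e)$ a unit in the boundary case $\tilde g=1$, where $P_1$ is a one-handled surface with holes: here one should exhibit a slope on the handle whose $\arf$-value is a unit of~$\z/m\z$, using that the $\arf$-values of the slopes of a one-handled piece together generate~$\z/m\z$ (their greatest common divisor with~$m$ being~$1$ by the pair-of-pants relation), or, in the few residual degenerate configurations, verify the conclusion directly; whenever the geometric genus allows $\tilde g$ to be taken $\ge2$ this difficulty does not arise.
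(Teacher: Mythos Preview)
Your strategy matches the paper's: modify each bridge by conjugating with a handle curve~$a\subset P_1$ and its $\tau$-image, and in the non-separating even case first equalise the parities using the $2$-Arf classification (the paper cites Lemma~11.2 in~\cite{Nbook} for exactly this). But the condition you put on the auxiliary curve is the wrong one, and with your suggested choice the move is vacuous. The composition rule (Definition~3.4 in~\cite{NP:2016}) and $\arf\bigl((\tau a)^{-1}\bigr)=\arf(a)$ give
\[
  \arf\bigl((\tau a)^{-1}d_i\,a\bigr)=\arf(d_i)+2\arf(a)-2,
\]
so your ``even correction'' is exactly~$-2$ and cannot be ignored: with $\arf(a)=1$ the shift is~$0$. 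What is needed is a curve with $\arf(a)=0$; then the shift is~$-2$, and iterating reaches every element of~$2\z/m\z$. This is what the paper does.

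Such a curve always exists, and this is where your anticipated ``hard part'' evaporates. The paper takes $Q_1\subset P_1$ to be the genus-$\tilde g$ piece obtained after cutting along the bridges, and invokes Lemma~5.1 of~\cite{NP:2009} for $\tilde g\ge2$ (giving $\arf(a_1)=0$) and Lemma~5.2 for $\tilde g=1$ (giving $(\arf(a_1),\arf(b_1))=(\tilde\de,0)$, so take~$b_1$). No unit value is required. Your fallback claim that the handle slopes have $\arf$-values generating~$\z/m\z$ is also false in general: by Theorem~4.4(d) of~\cite{NP:2016} their gcd with~$m$ is~$\tilde\de$, which can equal~$2$ (e.g.\ when $g=k+1$, $m\equiv2~(\mod4)$, $k_0=0$). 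Once you replace ``$\arf(e)$ a unit'' by ``$\arf(e)=0$'' throughout, your argument becomes the paper's.
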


\begin{proof}
Let $P_1$ and~$P_2$ be the connected components of the complement of the closed curves~$c_1,\dots,c_n$ in~$P$.
Each of these components is a surface of genus $\tilde g=(g+1-n)/2$ with $n$~holes.
The assumption $n\le g-1$ implies $\tilde g\ge1$.
\begin{enumerate}[$\bullet$]
\item
Consider the real $2$-Arf function $(\arf~(\mod2)):\piorbO(P)\to\z/2\z$.
If $m$ is even and $(P,\tau)$ is non-separating, then, according to Lemma~11.2 in~\cite{Nbook},
we can choose the bridges $d_1,\dots,d_{n-1}$ in such a way that
$$(\arf~(\mod2))(d_1)=\cdots=(\arf~(\mod2))(d_{n-1}).$$
This means for the original $m$-Arf function~$\arf$ that
$$\arf(d_1)\equiv\cdots\equiv\arf(d_{n-1})~(\mod2).$$
\item
Let $Q_1$ be the compact surface of genus~$\tilde g$ with one hole
obtained from~$P_1$ after removing all bridges $d_1,\dots,d_{n-1}$.
Let $\tilde\de$ be the Arf invariant of~$\arf|_{Q_1}$.
In the case $\tilde g\ge2$, Lemma~5.1 in~\cite{NP:2009} implies
that we can choose a standard generating set $(a_1,b_1,\dots,a_{\tilde g},b_{\tilde g},\tilde c)$ of~$\piorb(Q_1)$
in such a way that $\arf(a_1)=0$.
In the case $\tilde g=1$, Lemma~5.2 in~\cite{NP:2009} implies 
that we can choose a standard generating set $(a_1,b_1,\tilde c)$ of~$\piorb(Q_1)$ 
in such a way that $\arf(b_1)=0$.
Thus for $\tilde g\ge1$ there always exists a non-trivial closed curve~$a$ in~$P_1$ with $\arf(a)=0$,
which does not intersect any of the bridges $d_1,\dots,d_{n-1}$.
If we replace $d_i$ by $(\tau a)^{-1} d_i a$, then
$$\arf((\tau a)^{-1} d_i a)=\arf((\tau a)^{-1})+\arf(d_i)+\arf(a)-2.$$
Taking into account the fact that $\arf(a)=0$ we obtain
$$\arf((\tau a)^{-1} d_i a)=\arf(d_i)-2.$$
Repeating this operation we can obtain $\arf(d_i)=0$ for odd~$m$ and $\arf(d_i)\in\{0,1\}$ for even~$m$.
\item
Note that the property $\arf(d_1)\equiv\cdots\equiv\arf(d_{n-1})~(\mod2)$ (if $m$ is even and $(P,\tau)$ is non-separating)
is preserved during this process, hence $\arf(d_1)=\cdots=\arf(d_{n-1})$ at the end of the process.
\end{enumerate}
\end{proof}

\begin{prop}
\label{prop-canon1}
Let $(P,\tau)$ be a Klein surface of positive geometric genus. 
For any real $m$-Arf function on $(P,\tau)$ there exists a canonical symmetric generating set of $\piorb(P)$.
\end{prop}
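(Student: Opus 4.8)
The plan is to start from an arbitrary symmetric generating set of~$\piorb(P)$ --- one exists by the construction in~\cite{NP:2016} --- and to deform it in stages until the values of~$\arf$ on its members agree with the lists prescribed in Definition~\ref{def-canon}. Three elementary facts will do most of the work. First, a real $m$-Arf function vanishes on every twist and, on every oval~$c$, satisfies $\arf(c)=\arf(\tau c)=-\arf(c)$, so $\arf(c)=0$ for odd~$m$ and $\arf(c)\in\{0,m/2\}$ for even~$m$; thus the conditions on the~$\ga_i$ are automatic apart from a relabelling of the ovals. Second, since $\arf$ is real, $\arf(a_i')=\arf((\tau a_i)^{-1})=\arf(a_i)$ and $\arf(b_i')=\arf(b_i)$, so the conditions on the primed generators hold as soon as those on $a_i,b_i$ do, and it suffices to control $\al_i,\be_i$ on one half~$P_1$ together with the bridges~$d_i$. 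Third, Lemma~\ref{lem-modify-bridges} already puts the bridges into near-standard form.

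First I would fix the invariant curves. The ovals are intrinsic, so the only freedom is relabelling; I reorder them so that $\ga_1=\cdots=\ga_{k_0}=0$ and $\ga_{k_0+1}=\cdots=\ga_k=m/2$ when $m$ is even. When $\ve=1$ the similarity classes of ovals are already determined (Proposition~\ref{similarity}); I choose the distinguished oval~$c_n$ in the chosen class and order the ovals inside each group $\{\ga_i=0\}$ and $\{\ga_i=m/2\}$ so that those in the other class come first, the missing bridge at~$c_n$ accounting for the ``$-1$'' in the last block of the $\de$-list. When $\ve=0$ I use Proposition~\ref{prop-two-halves} and Lemma~\ref{lem-modify-bridges} to choose twists $c_{k+1},\dots,c_n$ with $n\le g-1$ --- possible precisely because the geometric genus is positive --- cutting $P$ into two halves $P_1,P_2$; these twists carry the value~$0$ automatically.

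Next I would normalise the bridges and the genus generators of~$P_1$. By Lemma~\ref{lem-modify-bridges} the bridges can be chosen with $\arf(d_i)=0$ for all~$i$ when $m$ is odd, with $\arf(d_i)\in\{0,1\}$ for all~$i$ when $m$ is even and $(P,\tau)$ is separating, and with all $\arf(d_i)$ equal and in~$\{0,1\}$ when $(P,\tau)$ is non-separating. In the separating case the parity of $\arf(d_i)$ equals that of $\arf(\ell\cup(\tau\ell)^{-1})$, hence is~$1$ exactly when $c_i\sim c_n$; combined with the ordering of the ovals this forces precisely the $\de$-pattern of Definition~\ref{def-canon} for $\ve=1$. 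In the non-separating case the common value of the~$\arf(d_i)$ equals $1-\de$ by Theorem~4{.}9(4) in~\cite{NP:2016}, as required for $\ve=0$. For the genus part I apply the construction of standard generating sets for $m$-Arf functions on surfaces with boundary from~\cite{NP:2009} (and~\cite{Nbook} for $m=2$) to $\arf|_{P_1}$, a function on a surface of genus $\tilde g=(g+1-n)/2\ge1$ with~$n$ holes, bringing $(\al_1,\be_1,\dots,\al_{\tilde g},\be_{\tilde g})$ to the required tuple --- whose shape depends on whether $\tilde g\ge2$ or $\tilde g=1$ and, for $\ve=1$, on the Arf invariant~$\tilde\de$ of~$\arf|_{P_1}$, while for $\ve=0$ the remaining freedom in the twists $c_{k+1},\dots,c_n$ is used to make that Arf invariant~$0$. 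As in the proof of Lemma~\ref{lem-modify-bridges}, these moves run along curves in~$P_1$ disjoint from the bridges, so they disturb neither the~$\ga_i$ nor the~$\de_i$, and reality of~$\arf$ then yields the correct values on the primed generators. Collecting the curves obtained gives a canonical symmetric generating set.

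The main obstacle is the simultaneous compatibility of these stages: the bridges, the genus generators of~$P_1$, and the ovals and twists must all be brought to their prescribed positions at once. The reason this is possible is that every modification used --- reshuffling ovals, the bridge moves of Lemma~\ref{lem-modify-bridges}, the standard-form moves inside~$P_1$ --- can be carried out within one half and away from the curves already fixed, so the stages do not interfere; the only genuinely global datum, the Arf invariant, is absorbed into the common bridge value when $\ve=0$ and into the $a_i,b_i$-pattern via~$\tilde\de$ when $\ve=1$, and the consistency of these choices with the counts~$k_i^j$ is exactly what Proposition~\ref{topinv-nec} secures. A minor technical point is that one needs the version of the result from~\cite{NP:2009} that allows several boundary curves, or else an induction adjoining the holes~$c_i$ one at a time.
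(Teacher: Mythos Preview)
Your proposal is correct and follows essentially the same route as the paper: fix the invariant curves and reorder the ovals, normalise the bridges via Lemma~\ref{lem-modify-bridges}, identify the common bridge value with $1-\de$ in the non-separating case via the Arf formula, and then bring the genus generators of~$P_1$ into standard form using the results of~\cite{NP:2009}. Your explicit remark that the bridge parity encodes the similarity relation with~$c_n$, and your discussion of why the stages do not interfere, make points that the paper leaves implicit.

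One small correction: for $\ve=0$ you say the ``remaining freedom in the twists $c_{k+1},\dots,c_n$'' is used to force the Arf invariant of~$\arf|_{P_1}$ to be~$0$. In fact no freedom is needed or used here. The twists satisfy $\arf(c_{k+1})=\cdots=\arf(c_n)=0$, so $P_1$ has boundary curves with even $\arf$-value; by Theorem~4{.}3(b) of~\cite{NP:2016} this already forces $\tilde\de=0$ when $\tilde g\ge2$, and by Theorem~4{.}3(c) it forces $\tilde\de=\gcd(m,1,\dots)=1$ when $\tilde g=1$. (Note in particular that for $\tilde g=1$ the required value is~$1$, not~$0$, matching the pattern $(\al_1,\be_1)=(1,0)$ in Definition~\ref{def-canon}.) The same holds for odd~$m$. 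So this step is automatic, not a choice.
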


\begin{proof}
Let $(g,k,\ve)$ be the topological type of the Klein surface $(P,\tau)$.
Let $\arf$ be a real $m$-Arf function on~$(P,\tau)$.
Let $c_1,\dots,c_n$ be invariant closed curves as in Proposition~\ref{prop-two-halves}.
\begin{enumerate}[$\bullet$]
\item
If $m\equiv0~(\mod2)$ then $\arf(c_{k+1})=\dots=\arf(c_n)=0$.
\item
If $m\equiv0~(\mod2)$ then $\arf(c_1),\dots,\arf(c_k)\in\{0,m/2\}$.
We can reorder the ovals $c_1,\dots,c_k$ in such a way that 
$$\arf(c_1)=\cdots=\arf(c_{k_0})=0,\quad\arf(c_{k_0+1})=\cdots=\arf(c_k)=m/2,$$
where $k_0$ is the numbers of ovals of~$(P,\tau)$ with the value of~$\arf$ equal to~$0$.
\item
If $m\equiv1~(\mod2)$ then $\arf(c_1)=\dots=\arf(c_n)=0$.
\item
We can choose bridges $d_1,\dots,d_{n-1}$ with values $\arf(d_i)$ as described in Lemma~\ref{lem-modify-bridges}
since the assumptions of the Lemma are satisfied.
\item
If $\ve=1$ and $m\equiv0~(\mod2)$,
we can change the order of $c_1,\dots,c_{k_0}$ and $c_{k_0+1},\dots,c_k$ to obtain the required values $\de_1,\dots,\de_{k-1}$.
\item
If $\ve=0$ and $m\equiv0~(\mod2)$, there exists $\xi\in\{0,1\}$ such that
$$\arf(d_1)=\cdots=\arf(d_{n-1})=\xi.$$
According to Theorem~4{.}9(4) in~\cite{NP:2016} the Arf invariant of $\arf$ is 
$$\de\equiv\sum\limits_{i=1}^{n-1}(1-\arf(c_i))(1-\arf(d_i))~(\mod2).$$
Using $\arf(d_i)=\xi$ we obtain
\begin{align*}
  \de
  &\equiv\sum\limits_{i=1}^{n-1}(1-\arf(c_i))(1-\arf(d_i))\\
  &\equiv(1-\xi)\cdot\sum\limits_{i=1}^{n-1}(1-\arf(c_i))\\
  &\equiv(1-\xi)\cdot\left((n-1)-\sum\limits_{i=1}^{n-1}\arf(c_i)\right)\\
  &\equiv(1-\xi)\cdot\left((n-1)-k_1\cdot\frac{m}2\right)~(\mod2).
\end{align*}
Recall that $k_1\cdot m/2\equiv1-g~(\mod m)$ by Proposition~\ref{topinv-nec} and $n\equiv g-1~(\mod2)$, hence
$$
  (n-1)-k_1\cdot\frac{m}2
  \equiv(g-2)-(1-g)
  \equiv2g-3
  \equiv1~(\mod2)
$$
and
$$\de\equiv(1-\xi)\cdot\left((n-1)-k_1\cdot\frac{m}2\right)\equiv1-\xi~(\mod2).$$
Therefore
$$\arf(d_1)=\cdots=\arf(d_{n-1})=\xi=1-\de.$$
\item
For $\tilde g\ge2$, Lemma~5.1 in~\cite{NP:2009} implies that we can choose
a standard generating set $(a_1,b_1,\dots,a_{\tilde g},b_{\tilde g},c_1,\dots,c_n)$ of $\piorb(P_1)$ in such a way that
$$(\arf(a_1),\arf(b_1),\dots,\arf(a_{\tilde g}),\arf(b_{\tilde g}))=(0,1-\tilde\de,1,\dots,1),$$
where $\tilde\de$ is the Arf invariant of~$\arf|_{P_1}$.
Moreover, if $m$ is odd then $\tilde\de=0$.
If $m$ is even and $\ve=0$ then there are closed curves around holes in $P_1$
such that the values of $\arf$ on these closed curves are even, namely $\arf(c_{k+1})=\cdots=\arf(c_n)=0$,
hence $\tilde\de=0$.
\item
If $\tilde g=1$, Lemma~5.2 in~\cite{NP:2009} implies that we can choose a standard generating set
$(a_1,b_1,c_1,\dots,c_n)$ of~$\piorb(P_1)$ in such a way that 
$$(\arf(a_1),\arf(b_1))=(\tilde\de,0),$$
where $\tilde\de=\gcd(m,\arf(a_1),\arf(b_1),\arf(c_1)+1,\dots,\arf(c_n)+1)$ is the Arf invariant of~$\arf|_{P_1}$.
If $m$ is odd then $\arf(c_1)=\cdots=\arf(c_n)=0$, hence $\tilde\de=1$.
If $\ve=0$ then $\arf(c_{k+1})=\cdots=\arf(c_n)=0$, hence $\tilde\de=1$.
\end{enumerate}
\end{proof}

\begin{prop}
\label{prop-canon2}
For any Klein surface $(P,\tau)$ and any symmetric generating set~$\calB$ of~$\piorb(P)$ and any tuple~$t$
that satisfies the conditions of Proposition~\ref{topinv-nec}
there exists a real $m$-Arf function of topological type~$t$ on~$(P,\tau)$ for which $\calB$ is canonical.
\end{prop}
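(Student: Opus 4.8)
The plan is to turn the bookkeeping of Definition~\ref{def-canon} around: instead of adapting~$\calB$ to a given~$\arf$, I would \emph{define}~$\arf$ by prescribing its values on~$\calB$ to be the canonical values of Definition~\ref{def-canon} (the values on $a_i',b_i'$ and on~$c_n$ being then determined by reality and by the defining relation of $\piorb(P_1)$), and then check that this prescription is admissible and that the resulting function has topological type~$t$. By Theorems~4{.}9 and~4{.}10 of~\cite{NP:2016} a real $m$-Arf function is determined by its values on a symmetric generating set, and one with prescribed values on~$\calB$ exists precisely when those values meet the compatibility conditions of those theorems: the values on the twists $c_{k+1},\dots,c_{n-1}$ vanish, the congruence $\sum_i\arf(c_i)\equiv1-g~(\mod m)$ holds, in the odd case $g\equiv1~(\mod m)$ and all $\ga_i=0$, and in the separating case $\arf|_{P_1}$ is a genuine $m$-Arf function on the genus-$\tilde g$ surface with $n$ holes, whose Arf invariant is then constrained by Theorem~4{.}3 of~\cite{NP:2016} in terms of the hole values. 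Thus the whole proof reduces to verifying these conditions for the canonical value-tuple, using the hypotheses on~$t$ from Proposition~\ref{topinv-nec}.

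First I would dispatch the easy cases. For odd~$m$ the canonical values have $\ga_i=\de_i=0$, the twist condition is vacuous, and $g\equiv1~(\mod m)$ is condition~3) on~$t$. For $\ve=0$ and even~$m$ the canonical oval values give $\sum_{i=1}^{n-1}\ga_i=k_1\cdot m/2\equiv1-g~(\mod m)$ by condition~1), the twist values vanish by construction, the constant choice $\de_i\equiv1-\de$ imposes nothing new, and the computation already carried out in the proof of Proposition~\ref{prop-canon1} shows that with this choice the Arf invariant of the resulting~$\arf$ is exactly~$\de$. The substantive case is $\ve=1$ with even~$m$: the canonical oval values satisfy $k_1\cdot m/2\equiv1-g~(\mod m)$ (condition~2)(f)), which also forces $\ga_n=m/2$ when $k_1\ge1$ and $\ga_n=0$ when $k_1=0$, consistent with $k=k_0+k_1$; and the prescribed Arf invariant~$\tilde\de$ of $\arf|_{P_1}$ must be realized on $\piorb(P_1)$ by the canonical values $(\al_1,\be_1,\dots)$. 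For $\tilde g\ge2$ (that is $g>k+1$) Theorem~4{.}3(b) of~\cite{NP:2016} forces $\tilde\de=0$ whenever some hole value is even, which happens exactly when $m\equiv0~(\mod4)$ or $k_0\ne0$ --- these are conditions~2)(a)--(b) --- and $\tilde\de\in\{0,1\}$ is then realizable by Lemma~5.1 of~\cite{NP:2009} with $(\al_1,\be_1,\dots)=(0,1-\tilde\de,1,\dots,1)$. For $\tilde g=1$ (that is $g=k+1$) Theorem~4{.}3(c) forces $\tilde\de\mid\gcd(m,\ga_1+1,\dots,\ga_k+1)$, and with the canonical hole values $\ga_i\in\{0,m/2\}$ this $\gcd$ is~$1$ when $k_0\ne0$ or $m\equiv0~(\mod4)$ and is~$2$ when $k_0=0$ and $m\equiv2~(\mod4)$, matching conditions~2)(c)--(e); the prescribed~$\tilde\de$ is then realized by Lemma~5.2 of~\cite{NP:2009} with $(\al_1,\be_1)=(\tilde\de,0)$.

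It remains to read off the topological type of the $\arf$ so obtained. The numbers of ovals with value~$0$ and~$m/2$ are $k_0$ and~$k_1$ by construction, and for $\ve=0$ the Arf invariant is~$\de$ by the computation quoted above. For $\ve=1$ the Arf invariant of $\arf|_{P_1}$ is~$\tilde\de$ by construction; moreover, since the bridge $d_i$ joins $c_i$ to $c_n=c_k$, an oval $c_i$ is similar to~$c_k$ if and only if $\arf(d_i)=\de_i$ is odd, so, taking the chosen similarity class to be the one containing~$c_k$ (permissible since the $k_j^i$ are defined only up to the swap $k_j^i\leftrightarrow k_j^{1-i}$), the canonical choice of the $\de_i$ places exactly $k_j^0$ ovals of value $j\cdot m/2$ in the chosen class and $k_j^1$ in the other, once $c_k$ itself (which lies in the chosen class and has value~$m/2$ when $k_1\ge1$) is counted. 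Finally $\calB$ is canonical for~$\arf$ by the very definition of~$\arf$. The main obstacle is exactly the $\ve=1$, $g=k+1$ sub-case, where $\tilde\de$ may be a nontrivial proper divisor of~$m$ rather than~$0$ or~$1$: there the five sub-cases~2)(a)--(e) of Proposition~\ref{topinv-nec} have to be matched precisely against the divisibility constraint of Theorem~4{.}3(c) in~\cite{NP:2016}, and one must invoke Lemma~5.2 of~\cite{NP:2009} to produce an $m$-Arf function on the one-holed torus half with the correct hole values and the prescribed Arf invariant.
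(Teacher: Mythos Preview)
Your approach is the same as the paper's: prescribe the canonical values of Definition~\ref{def-canon} on~$\calB$, invoke Theorems~4.9/4.10 of~\cite{NP:2016} to get a real $m$-Arf function with those values, and then verify its topological type is~$t$. The one place where your argument is slightly looser than the paper's is the verification that the constructed~$\arf|_{P_1}$ has Arf invariant exactly~$\tilde\de$ in the separating case. You appeal to Lemmas~5.1/5.2 of~\cite{NP:2009}, but those go the other way (from a given Arf invariant to a good basis); what you actually need is the forward formula for the Arf invariant in terms of the values~$(\al_i,\be_i,\ga_i)$, which is Theorem~4.4(c),(d) of~\cite{NP:2016}, and this is what the paper uses to compute~$\tilde\de'=\tilde\de$ directly in the two nontrivial sub-cases $g>k+1$, $m\equiv2~(\mod4)$, $k_0=0$ and $g=k+1$, $m\equiv2~(\mod4)$, $k_0=0$. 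Once you replace those citations the argument is complete; your explicit check of the similarity-class counts via the parity of~$\de_i$ is in fact more detailed than the paper's, which simply asserts that the type is~$t$.
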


\begin{proof}
Let $\calV=(\al_i,\be_i,\al_i',\be_i',\ga_i,\de_i)$ satisfy the conditions in Definition~\ref{def-canon}.
\begin{enumerate}[$\bullet$]
\item
Case $\ve=0$, $m\equiv0~(\mod2)$, $t=(g,\de,k_0,k_1)$:
We have $\ga_1=\cdots=\ga_{k_0}=0$, $\ga_{k_0+1}=\cdots=\ga_{k_0+k_1}=m/2$,
hence $\sum\limits_{i=1}^k\,\ga_i=k_1\cdot m/2$.
The tuple~$t$ satisfies the conditions of Proposition~\ref{topinv-nec}, hence $k_1\cdot m/2\equiv1-g~(\mod m)$.
Therefore $\sum\limits_{i=1}^k\,\ga_i\equiv1-g~(\mod m)$.
Other conditions of Theorem~4{.}9(2) in~\cite{NP:2016} are clearly satisfied.
Hence there exists a real $m$-Arf function~$\arf$ on~$P$ with the values~$\calV$ on~$\calB$.
Let $\de'$ be the Arf invariant of~$\arf$, then
\begin{align*}
  \de'
  &\equiv\sum\limits_{i=1}^{n-1}(1-\ga_i)(1-\de_i)
  \equiv\sum\limits_{i=1}^{n-1}(1-\ga_i)(1-(1-\de))\\
  &\equiv\de\cdot\sum\limits_{i=1}^{n-1}(1-\ga_i)
  \equiv\de\cdot\left((n-1)-\sum\limits_{i=1}^{n-1}\ga_i\right)\\
  &\equiv\de\cdot\left((n-1)-k_1\cdot\frac{m}{2}\right)~(\mod2).
\end{align*}
Recall that $k_1\cdot m/2\equiv1-g~(\mod m)$ and $n\equiv g-1~(\mod2)$, hence
$$(n-1)-k_1\cdot\frac{m}{2}\equiv(g-2)-(1-g)\equiv2g-3\equiv1~(\mod2)$$
and
$$\de'\equiv\de\cdot\left((n-1)-k_1\cdot\frac{m}{2}\right)\equiv\de~(\mod2).$$
Hence $\arf$ is a real $m$-Arf function on~$P$ of type~$t$ and $\calB$ is canonical for $\arf$.
\item
Case $\ve=1$, $m\equiv0~(\mod2)$, $t=(g,\tilde\de,k^0_0,k^0_1,k^1_0,k^1_1)$:
The tuple~$t$ satisfies the conditions of Proposition~\ref{topinv-nec}, hence
$$1-g\equiv k_1\cdot\frac{m}{2}~(\mod m)$$
and therefore 
$$1-g\equiv0~(\mod\frac{m}{2}).$$
Other conditions of Theorem~4{.}9(2) in~\cite{NP:2016} are clearly satisfied.
Hence there exists a real $m$-Arf function~$\arf$ on~$P$ with the values~$\calV$ on~$\calB$.
Let $\tilde\de'$ be the Arf invariant of~$\arf|_{P_1}$.
The $m$-Arf function~$\arf$ is real, hence according to Proposition~\ref{topinv-nec}, we have
\begin{enumerate}[$\bullet$]
\item
If $g>k+1$ and $m\equiv0~(\mod4)$ then $\tilde\de'=0$.
\item
If $g>k+1$ and $k_0\ne0$ then $\tilde\de'=0$.
\item
If $g=k+1$ and $m\equiv0~(\mod4)$ then $\tilde\de'=1$.
\item
If $g=k+1$ and $k_0\ne0$ then $\tilde\de'=1$.
\item
If $g=k+1$, $m\equiv2~(\mod4)$ and $k_0=0$ then $\tilde\de'\in\{1,2\}$.
\end{enumerate}
On the other hand $t=(g,\tilde\de,k^0_0,k^0_1,k^1_0,k^1_1)$ satisfies the conditions of Proposition~\ref{topinv-nec},
hence
\begin{enumerate}[$\bullet$]
\item
If $g>k+1$ and $m\equiv0~(\mod4)$ then $\tilde\de=0$.
\item
If $g>k+1$ and $k_0\ne0$ then $\tilde\de=0$.
\item
If $g=k+1$ and $m\equiv0~(\mod4)$ then $\tilde\de=1$.
\item
If $g=k+1$ and $k_0\ne0$ then $\tilde\de=1$.
\item
If $g=k+1$, $m\equiv2~(\mod4)$ and $k_0=0$ then $\tilde\de\in\{1,2\}$.
\end{enumerate}
Hence if $m\equiv0~(\mod4)$ or $k_0\ne0$ we have $\tilde\de'=\tilde\de$.
It remains to consider the case $m\equiv2~(\mod4)$, $k_0=0$.
In the case $g>k+1$, $m\equiv2~(\mod4)$, $k_0=0$, we have $\tilde g\ge2$
and the values of the $m$-Arf function~$\arf|_{P_1}$ on the boundary curves $\arf(c_i)$ are all equal to~$m/2$ and hence odd.
Then, according to Theorem~4{.}4(c) in~\cite{NP:2016}, the Arf invariant~$\tilde\de'$ is given by
$$\tilde\de'\equiv\sum\limits_{i=1}^{\tilde g}(1-\al_i)(1-\be_i)~(\mod2).$$
We have $(\al_1,\be_1,\dots,\al_{\tilde g},\be_{\tilde g})=(0,1-\tilde\de,1,\dots,1)$, hence
$$
  \tilde\de'
  \equiv\sum\limits_{i=1}^{\tilde g}(1-\al_i)(1-\be_i)
  \equiv1\cdot\tilde\de+0+\cdots+0
  \equiv\tilde\de~(\mod2)
$$
and therefore $\tilde\de'=\tilde\de$.
In the case $g=k+1$, $m\equiv2~(\mod4)$, $k_0=0$, we have $\tilde g=1$
and the values of the $m$-Arf function~$\arf|_{P_1}$ on the boundary curves $\arf(c_i)$ are all equal to~$m/2$.
Then, according to Theorem~4{.}4(d) in~\cite{NP:2016}, the Arf invariant~$\tilde\de'\in\{1,2\}$ is given by
$$\tilde\de'=\gcd\left(m,\al_1,\be_1,\frac{m}{2}+1\right).$$
We have $(\al_1,\be_1)=(\tilde\de,0)$, hence $\gcd(\al_1,\be_1)=\tilde\de\in\{1,2\}$.
For $m\equiv2~(\mod4)$ we have $\gcd\left(m,\frac{m}{2}+1\right)=2$.
Therefore
$$\tilde\de'=\gcd\left(m,\al_1,\be_1,\frac{m}{2}+1\right)=\gcd(\tilde\de,2)=\tilde\de.$$
Hence $\arf$ is a real $m$-Arf function on~$P$ of type~$t$ and $\calB$ is canonical for $\arf$.
\item
Case $m\equiv1~(\mod2)$, $t=(g,k)$:
The tuple~$t$ satisfies the conditions of Proposition~\ref{topinv-nec}, hence $g\equiv1~(\mod m)$.
Other conditions of Theorem~4{.}10(2) in~\cite{NP:2016} are clearly satisfied.
Hence there exists a real $m$-Arf function~$\arf$ on~$P$ with the values~$\calV$ on~$\calB$.
The topological type of $\arf$ is $t$ and $\calB$ is canonical for $\arf$.
\end{enumerate}
\end{proof}

\begin{prop}
\label{topinv-suff}
The conditions in Proposition~\ref{topinv-nec} are necessary and sufficient
for a tuple to be a topological type of a real $m$-Arf function.
\end{prop}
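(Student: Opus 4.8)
The plan is to deduce the statement from the propositions already in hand, splitting it into the two implications. \emph{Necessity} is nothing but Proposition~\ref{topinv-nec}: if a real $m$-Arf function of topological type~$t$ exists on a Klein surface of type $(g,k,\ve)$ with $g\ge2$, then~$t$ satisfies the listed congruences and, in the separating even case, the constraints on~$\tilde\de$. So for this half there is nothing further to prove.

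\emph{Sufficiency.} I would start from a tuple~$t$ satisfying all the conditions of Proposition~\ref{topinv-nec} and first read off from its shape the topological data $(g,k,\ve)$ of a Klein surface: for $\ve=0$, $m$ even, $t=(g,\de,k_0,k_1)$ with $k=k_0+k_1$; for $\ve=1$, $m$ even, $t=(g,\tilde\de,k^0_0,k^0_1,k^1_0,k^1_1)$ with $k=k^0_0+k^0_1+k^1_0+k^1_1$; for $m$ odd, $t=(g,k)$, the same pair serving as a topological type for either parity~$\ve$ permitted by Weichold. Under the standing hypotheses of the paper — $g\ge2$ and positive geometric genus — Weichold's classification provides a Klein surface $(P,\tau)$ of that type, and by~\cite{NP:2016} the group $\piorb(P)$ admits a symmetric generating set~$\calB$ (in the non-separating case one chooses an auxiliary number~$n$ of invariant curves with $n\equiv g+1~(\mod2)$; positive geometric genus ensures that the range of~$n$ needed for the canonical normal form, as in Lemma~\ref{lem-modify-bridges}, is non-empty). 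Now I would invoke Proposition~\ref{prop-canon2} for $(P,\tau)$, $\calB$ and~$t$: it directly yields a real $m$-Arf function on $(P,\tau)$ of topological type~$t$ for which $\calB$ is canonical. Hence~$t$ is realised, which is sufficiency.

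The hard part is not really hard: essentially all the work lies in the already-proved Proposition~\ref{prop-canon2} (which in turn rests on Proposition~\ref{prop-canon1}, Lemma~\ref{lem-modify-bridges} and the classification theorems of~\cite{NP:2016}). The only point requiring genuine care is the bookkeeping that matches the combinatorial shape of an admissible~$t$ to an actual Klein surface of positive geometric genus, so that Proposition~\ref{prop-canon2} applies, and in particular treating the odd-$m$ case separately, since there~$\ve$ is not part of~$t$.
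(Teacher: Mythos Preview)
Your proposal is correct and follows exactly the paper's approach: necessity is Proposition~\ref{topinv-nec}, and sufficiency is Proposition~\ref{prop-canon2}. The paper's proof is in fact just those two sentences; you have merely spelled out the setup (extracting $(g,k,\ve)$ from~$t$, picking a Klein surface via Weichold, fixing a symmetric generating set) that the paper leaves implicit. One small inaccuracy in your commentary: Proposition~\ref{prop-canon2} does not rest on Proposition~\ref{prop-canon1} or Lemma~\ref{lem-modify-bridges}; its proof uses only the classification results from~\cite{NP:2016} and the definition of a canonical generating set.
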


\begin{proof}
Proposition~\ref{topinv-nec} shows that the conditions are necessary.
Proposition~\ref{prop-canon2} shows that the conditions are sufficient
as we constructed an $m$-Arf function of type~$t$ for any tuple~$t$ that satisfies the conditions.
\end{proof}

\begin{mydef}
\label{def-topeq-Arf}
Two $m$-Arf functions~$\arf_1$ and~$\arf_2$ on a Klein surface $(P,\tau)$ are {\it topologically equivalent\/}
if there exists a homeomorphism~$\varphi:P\to P$ such that $\varphi\circ\tau=\tau\circ\varphi$ and
$\arf_1=\arf_2\circ\varphi_*$ for the induced automorphism~$\varphi_*$ of~$\piorb(P)$.
\end{mydef}

\begin{prop}
\label{topeq}
Let $(P,\tau)$ be a Klein surface of positive geometric genus. 
Two $m$-Arf functions on $(P,\tau)$ are topologically equivalent if and only if they have the same topological type.
\end{prop}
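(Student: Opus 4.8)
The plan is to prove Proposition~\ref{topeq} by showing both implications, with the harder one (same topological type $\Rightarrow$ topologically equivalent) reduced to the machinery of canonical symmetric generating sets already established.

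For the necessity direction, suppose $\arf_1$ and $\arf_2$ are topologically equivalent via a homeomorphism $\varphi:P\to P$ with $\varphi\circ\tau=\tau\circ\varphi$ and $\arf_1=\arf_2\circ\varphi_*$. Such a $\varphi$ preserves the genus $g$, maps ovals to ovals and twists to twists, and preserves the separating/non-separating dichotomy, so it preserves $(g,k,\ve)$. One then checks that each piece of the topological type is a $\varphi$-invariant: the $m$-Arf invariant $\de$ (respectively $\tilde\de=$ Arf invariant of $\arf|_{P_1}$) is invariant because $\varphi$ either preserves the two halves $P_1,P_2$ or swaps them, and in the separating case $\tilde\de$ is by definition independent of the choice of half; the numbers $k_j$ counting ovals with $\arf$-value $j\cdot m/2$ are invariant since $\arf_1(c)=\arf_2(\varphi_*c)$; and similarity of ovals is a homeomorphism-invariant notion by Proposition~\ref{similarity} (it only depends on $\arf\pmod 2$ and on simple paths in a half), so the refined counts $k_j^i$ are preserved up to the allowed swap $i\leftrightarrow 1-i$. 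For odd $m$ the type is just $(g,k)$, which is obviously preserved.

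For the sufficiency direction, suppose $\arf_1$ and $\arf_2$ have the same topological type $t$ on $(P,\tau)$, a Klein surface of positive geometric genus. By Proposition~\ref{prop-canon1} there exist symmetric generating sets $\calB_1$ and $\calB_2$ of $\piorb(P)$ that are canonical for $\arf_1$ and $\arf_2$ respectively. Since $\arf_1$ and $\arf_2$ have the same topological type, Definition~\ref{def-canon} prescribes exactly the same values of the Arf function on $\calB_1$ and on $\calB_2$ (the defining lists depend only on $t$, $\tilde g$ and $m$). Now I invoke the change-of-coordinates principle for symmetric generating sets: any two symmetric generating sets of $\piorb(P)$ of the same combinatorial shape — same $n$, same designation of which $c_i$ are ovals versus twists, same bridge structure — are related by an orientation-preserving homeomorphism of $P$ commuting with $\tau$. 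This is the standard fact that the mapping class group of $(P,\tau)$ acts transitively on such configurations; for $\ve=1$ it is classical, and for $\ve=0$ one must also observe that the auxiliary twists $c_{k+1},\dots,c_n$ and the half-decomposition they induce can be matched up, using Proposition~\ref{prop-two-halves} together with the fact that a homeomorphism commuting with $\tau$ can be chosen to carry one system of twists to another. Applying this, choose $\varphi:P\to P$ with $\varphi\circ\tau=\tau\circ\varphi$ and $\varphi_*\calB_1=\calB_2$ (matching the ordering so that the $c_i$, $d_i$ correspond). Then $\arf_2\circ\varphi_*$ and $\arf_1$ agree on the generating set $\calB_1$, and since an $m$-Arf function is determined by its values on a generating set of $\piorb(P)$, we get $\arf_1=\arf_2\circ\varphi_*$, i.e.\ $\arf_1$ and $\arf_2$ are topologically equivalent.

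The main obstacle is the change-of-coordinates step: making precise and citing (or proving) that the subgroup of the mapping class group consisting of classes commuting with $\tau$ acts transitively on symmetric generating sets of a fixed combinatorial shape, including careful handling of the non-separating case where the decomposition into two halves is non-canonical. I expect this to follow from the results on symmetric generating sets in~\cite{NP:2016} and the classical topology of surfaces with involution as in~\cite{Nbook,N1990}, but it requires checking that all the discrete choices recorded in Definition~\ref{def-symm-gen} — ovals versus twists among the $c_i$, the bridge homotopy classes $d_i$, and in the separating case the chosen similarity class — can be simultaneously normalised by a single equivariant homeomorphism. A secondary subtlety is the $\ve=1$, $m\equiv2\pmod 4$, $k_0=0$ case where $\tilde\de$ can take the value $2$; here one must confirm that the canonical-form data of Definition~\ref{def-canon} still pins down $\arf$ uniquely up to equivariant homeomorphism, which it does because the value list $(\al_i,\be_i)=(\tilde\de,0)$ already encodes $\tilde\de$ directly.
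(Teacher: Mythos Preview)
Your approach is essentially the same as the paper's: reduce the sufficiency direction to Proposition~\ref{prop-canon1} (canonical symmetric generating sets) and then pass between two such generating sets by an equivariant homeomorphism. The paper's proof is in fact much terser than yours---it only treats the sufficiency direction, and compresses the change-of-coordinates step into the single sentence ``Hence any two real $m$-Arf functions of topological type~$t$ are topologically equivalent,'' taking for granted precisely the transitivity statement you flag as the main obstacle.

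So your proposal is correct and aligned with the paper, but more scrupulous: you spell out the necessity direction (which the paper omits as evident), and you explicitly isolate the step that the mapping class group of $(P,\tau)$ acts transitively on symmetric generating sets of a fixed combinatorial shape. You are right that this step needs an external input; the paper relies implicitly on the theory of symmetric generating sets developed in~\cite{NP:2016} and~\cite{Nbook}, where such transitivity is part of the package. Your worry about the non-separating case (non-uniqueness of the half-decomposition) is legitimate but is handled once one fixes the same~$n$ for both canonical sets, as Definition~\ref{def-canon} does, and then appeals to the cited references. The secondary subtlety you raise about $\tilde\de=2$ is not an issue for exactly the reason you give.
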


\begin{proof}
Let $(g,k,\ve)$ be the topological type of the Klein surface $(P,\tau)$.
Proposition~\ref{prop-canon1} shows that for any real $m$-Arf function~$\arf$ of topological type~$t$
we can choose a symmetric generating set~$\calB$ (the canonical generating set for~$\arf$)
with the values of~$\arf$ on~$\calB$ determined completely by~$t$.
Hence any two real $m$-Arf functions of topological type~$t$ are topologically equivalent.
\end{proof}

\section{Moduli Spaces}

\label{sec-moduli}


We will use the results on the moduli spaces of real Fuchsian groups and of Klein surfaces
described in~\cite{N1975, N1978moduli}:
We consider hyperbolic Klein surfaces, i.e. we assume that the genus is~$g\ge2$.
Let $\calM_{g,k,\ve}$ be the moduli space of Klein surfaces of topological type~$(g,k,\ve)$.
Let $\Ga_{g,n}$ be the group generated by the elements
$$v=\{a_1,b_1,\dots,a_g,b_g,c_1,\dots,c_n\}$$
with a single defining relation
$$\prod\limits_{i=1}^g\,[a_i,b_i]\prod\limits_{i=1}^n\,c_i=1.$$
Let $\Aut_+(\hyp)$ be the group of all orientation-preserving isometries of~$\hyp$.
The {\it Fricke space}~$\tT_{g,n}$ is the set of all monomorphisms $\psi:\Ga_{g,n}\to\Aut_+(\hyp)$
such that
$$\{\psi(a_1),\psi(b_1),\dots,\psi(a_g),\psi(b_g),\psi(c_1),\dots,\psi(c_n)\}$$
is a generating set of a Fuchsian group of signature~$(g,n)$.
The Fricke space~$\tT_{g,n}$ is homeomorphic to~$\r^{6g-3+3n}$.
The group $\Aut_+(\hyp)$ acts on $\tT_{g,n}$ by conjugation.
The {\it Teichm\"uller space\/} is $T_{g,n}=\tT_{g,n}/\Aut_+(\hyp)$.

\begin{thm}
\label{thm-moduli-Klein}
Let $(g,k,\ve)$ be a topological type of a Klein surface.
In the case $\ve=1$ let $n=k$.
In the case $\ve=0$ we choose $n\in\{k+1,\dots,g+1\}$ such that $n\equiv g+1~(\mod2)$.
Let $\tilde g=(g+1-n)/2$.
The moduli space~$\calM_{g,k,\ve}$ of Klein surfaces of topological type~$(g,k,\ve)$
is the quotient of the Teichm\"uller space~$T_{\tilde g,n}$ by a discrete group of autohomeomorphisms $\Mod_{g,k,\ve}$.
The space $T_{\tilde g,n}$ is homeomorphic to $\r^{3g-3}$.
\end{thm}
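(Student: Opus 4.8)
The plan is to reduce the moduli problem for Klein surfaces of type $(g,k,\ve)$ to the moduli problem for one of the two halves---a bordered hyperbolic surface of genus $\tilde g$ with $n$ holes---and then to read off the answer from the Fricke and Teichm\"uller spaces introduced above together with the description of moduli of real Fuchsian groups in~\cite{N1975,N1978moduli}. First I would fix a decomposition of $(P,\tau)$ into two halves: in the case $\ve=1$ take $n=k$, and in the case $\ve=0$ adjoin twists $c_{k+1},\dots,c_n$ as in Proposition~\ref{prop-two-halves}, so that cutting $P$ along $c_1,\dots,c_n$ yields two components $P_1$ and $P_2$ interchanged by~$\tau$. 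Each half is a surface of genus $\tilde g=(g+1-n)/2$ with $n$ holes, and since $\tau|_{P_1}:P_1\to P_2$ is an anti-holomorphic isomorphism, the complex structure of $P$ together with the involution $\tau$ is recovered from the dianalytic structure of the single bordered surface $P_1$ by the standard gluing construction along $c_1,\dots,c_n$ (identically along the ovals, via a fixed-point-free involution along the twists). Endowing the interior of $P_1$ with its hyperbolic metric with geodesic boundary presents it as $\hyp/\hat\Ga$ for a Fuchsian group $\hat\Ga$ of signature~$(\tilde g,n)$, the curves $c_1,\dots,c_n$ corresponding to the conjugacy classes of the standard boundary generators; conversely every such $\hat\Ga$ produces a Klein surface of type $(g,k,\ve)$. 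That this sets up a correspondence between Klein surfaces of type $(g,k,\ve)$ and bordered hyperbolic surfaces of genus $\tilde g$ with $n$ holes is essentially the content of the results on real Fuchsian groups in~\cite{N1975,N1978moduli}.

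Next I would phrase this in Teichm\"uller-theoretic terms. A point of the Fricke space $\tT_{\tilde g,n}$ is a monomorphism $\psi:\Ga_{\tilde g,n}\to\Aut_+(\hyp)$ whose image generates a Fuchsian group of signature~$(\tilde g,n)$, i.e.\ a marked bordered hyperbolic surface of genus $\tilde g$ with $n$ holes; conjugation by $\Aut_+(\hyp)$ forgets the choice of base frame, so $T_{\tilde g,n}=\tT_{\tilde g,n}/\Aut_+(\hyp)$ is the space of such marked surfaces. Two markings yield isomorphic Klein surfaces precisely when they differ by a homeomorphism of $P$ commuting with $\tau$, where in the case $\ve=0$ one also permits the labels of the auxiliary twists $c_{k+1},\dots,c_n$ to be permuted, since the decomposition into halves is not unique. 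The isotopy classes of such homeomorphisms form a group acting on $\Ga_{\tilde g,n}$ by outer automorphisms, hence on $\tT_{\tilde g,n}$ and on $T_{\tilde g,n}$; I would call this group $\Mod_{g,k,\ve}$. By construction the map sending a marked surface to its underlying Klein surface descends to a bijection $T_{\tilde g,n}/\Mod_{g,k,\ve}\to\calM_{g,k,\ve}$, and since $\Mod_{g,k,\ve}$ sits inside the mapping class group of the bordered surface it acts on $T_{\tilde g,n}$ by autohomeomorphisms and properly discontinuously, in particular discretely (the standard properness of the mapping class action on Teichm\"uller space, made precise for real Fuchsian groups in~\cite{N1975,N1978moduli}); hence $\calM_{g,k,\ve}=T_{\tilde g,n}/\Mod_{g,k,\ve}$ with the moduli topology.

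For the last assertion I would use that $\Aut_+(\hyp)\cong\PSL$ acts freely and properly discontinuously on the Fricke space $\tT_{\tilde g,n}\cong\r^{6\tilde g-3+3n}$ (via Fenchel--Nielsen type coordinates), so that $T_{\tilde g,n}$ is a manifold diffeomorphic to $\r^{6\tilde g-6+3n}$; substituting $\tilde g=(g+1-n)/2$ gives $6\tilde g-6+3n=3(g+1-n)-6+3n=3g-3$, so $T_{\tilde g,n}\cong\r^{3g-3}$. I expect the main difficulty to lie not in any computation but in making the reduction to the bordered half entirely precise---in particular identifying the group $\Mod_{g,k,\ve}$ and controlling the effect of the non-canonical choice of twists when $\ve=0$---which is why I would lean on the cited results on moduli of real Fuchsian groups and of Klein surfaces.
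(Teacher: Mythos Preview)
The paper does not supply a proof of this theorem: it is stated as a known result, explicitly attributed to~\cite{N1975,N1978moduli} in the paragraph preceding it. Your plan is consistent with the standard argument from those references---reduce to one half $P_1$, identify marked halves with points of $T_{\tilde g,n}$, and quotient by the group of self-homeomorphisms of $(P,\tau)$---and your dimension count $6\tilde g-6+3n=3g-3$ is correct.
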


\begin{thm}
The moduli space of Klein surfaces of genus~$g$ decomposes into connected components~$\calM_{g,k,\ve}$.
Each connected component is homeomorphic to a quotient of~$\r^{3g-3}$ by a discrete group action.
\end{thm}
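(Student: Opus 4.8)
The plan is to deduce the statement formally from Theorem~\ref{thm-moduli-Klein} together with Weichold's classification of Klein surfaces up to topological equivalence, the only non-routine input being the local constancy of the topological type. Recall that the moduli space $\calM_g$ of Klein surfaces of genus $g\ge2$ is, as a set, the union of the subsets $\calM_{g,k,\ve}$ consisting of Klein surfaces of topological type $(g,k,\ve)$, taken over all admissible pairs $(k,\ve)$, namely $\ve=1$ with $1\le k\le g+1$ and $k\equiv g+1\pmod2$, or $\ve=0$ with $0\le k\le g$. By Weichold's theorem, every genus-$g$ Klein surface has exactly one such topological type; hence the finitely many sets $\calM_{g,k,\ve}$ are pairwise disjoint and cover $\calM_g$.

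First I would check that each $\calM_{g,k,\ve}$ is connected. Choosing $n$ and $\tilde g=(g+1-n)/2$ as in Theorem~\ref{thm-moduli-Klein} (with $n=k$ if $\ve=1$, and $n\in\{k+1,\dots,g+1\}$, $n\equiv g+1\pmod2$, if $\ve=0$), that theorem identifies $\calM_{g,k,\ve}$ with the quotient of the Teichm\"uller space $T_{\tilde g,n}$ by the discrete group $\Mod_{g,k,\ve}$, and $T_{\tilde g,n}$ is homeomorphic to $\r^{3g-3}$. Since $\r^{3g-3}$ is connected and the quotient map is continuous and surjective, $\calM_{g,k,\ve}$ is connected, and it is by construction homeomorphic to a quotient of $\r^{3g-3}$ by a discrete group action, which is the second assertion of the theorem.

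Next I would show that each $\calM_{g,k,\ve}$ is open, and therefore also closed, in $\calM_g$. This is the step that needs the most care, and it rests on the fact that the topological type $(g,k,\ve)$ is a locally constant invariant on $\calM_g$: a sufficiently small deformation of a pair $(P,\tau)$ changes neither the genus of $P$, nor the number $k$ of connected components of the fixed-point set $P^\tau$, nor whether $P\setminus P^\tau$ is connected, since these are discrete invariants of the continuously varying real structure (see the description of moduli of real Fuchsian groups in~\cite{N1975, N1978moduli}). Hence each $\calM_{g,k,\ve}$ is open; and since $\calM_g$ is partitioned into the finitely many open sets $\calM_{g,k,\ve}$, the complement of any one of them is a finite union of open sets and therefore open, so each $\calM_{g,k,\ve}$ is also closed.

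Finally, a subset of a topological space that is simultaneously connected, open and closed is one of its connected components; applying this to each $\calM_{g,k,\ve}$ yields the asserted decomposition of $\calM_g$ into connected components, each homeomorphic to a quotient of $\r^{3g-3}$ by a discrete group action. I expect the main obstacle to be precisely the local constancy of the topological type, which is classical in the deformation theory of real algebraic curves; once it is granted, the rest is a formal consequence of Theorem~\ref{thm-moduli-Klein} and Weichold's theorem.
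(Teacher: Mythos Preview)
Your argument is correct and is the natural way to deduce the statement from Theorem~\ref{thm-moduli-Klein} and Weichold's classification. Note, however, that in the paper this theorem is stated without proof: both it and Theorem~\ref{thm-moduli-Klein} are presented at the start of section~\ref{sec-moduli} as background results taken from~\cite{N1975, N1978moduli}, so there is no ``paper's own proof'' to compare against. Your reconstruction --- partitioning by Weichold type, pulling connectedness and the $\r^{3g-3}$-quotient description down from Theorem~\ref{thm-moduli-Klein}, and invoking local constancy of the topological type to get openness --- is exactly the intended reasoning, and your identification of the local-constancy step as the only substantive input is accurate; in the cited references this is handled via the parametrisation by real Fuchsian groups, where the number of boundary components and the separating/non-separating dichotomy are manifestly stable under small deformations of the generators.
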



\begin{thm}
\label{thm-moduli-spin}
Let $(g,k,\ve)$ be a topological type of a Klein surface.
Assume that the geometric genus of such Klein surfaces is positive, 
i.e.\ $k\le g-2$ if~$\ve=0$ and $k\le g-1$ if $\ve=1$. 
Let $t$ be a tuple that satisfies the conditions of Proposition~\ref{topinv-nec}.
The space $S(t)$ of all $m$-spin bundles of type~$t$ on a Klein surface of type $(g,k,\ve)$
is connected and diffeomorphic to
$$\r^{3g-3}/\Mod_t,$$
where $\Mod_t$ is a discrete group of diffeomorphisms.
\end{thm}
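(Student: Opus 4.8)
The plan is to combine the bijection between $m$-spin bundles and real $m$-Arf functions with the fibration picture for moduli of Klein surfaces from Theorem~\ref{thm-moduli-Klein}. First I would fix the topological type $(g,k,\ve)$ and the tuple $t$, and recall that an $m$-spin bundle on a Klein surface $(P,\tau)$ corresponds to a real $m$-Arf function $\arf$ on $(P,\tau)$, as stated in section~\ref{sec-classification} (following~\cite{NP:2016}); under this correspondence the topological type of the bundle is exactly the topological type of $\arf$ in the sense of Definitions~\ref{def-type-nonsep-even}, \ref{def-type-sep-even} and the odd-$m$ definition. So the space $S(t)$ is the space of pairs $((P,\tau),\arf)$ where $(P,\tau)$ ranges over Klein surfaces of type $(g,k,\ve)$ and $\arf$ is a real $m$-Arf function of topological type $t$. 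There is a natural forgetful map $S(t)\to\calM_{g,k,\ve}$, and I would study its fibres and its total space simultaneously by passing to Fricke/Teichm\"uller level.

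Next I would realise everything on the level of the Fricke space. Choose $n$ as in Theorem~\ref{thm-moduli-Klein} (with $n=k$ if $\ve=1$, and $n\in\{k+1,\dots,g-1\}$, $n\equiv g-1\pmod2$, if $\ve=0$ — note the positive-geometric-genus hypothesis is exactly what makes such an $n$ available, as in Lemma~\ref{lem-modify-bridges} and Proposition~\ref{prop-canon1}), and set $\tilde g=(g+1-n)/2$. A point of $\calM_{g,k,\ve}$ is represented, via a symmetric generating set $\calB$ of $\piorb(P)$ as in Definition~\ref{def-symm-gen}, by a point of the Fricke space $\tT_{\tilde g,n}$ (half of the surface determines the whole by $a_i'=(\tau a_i)^{-1}$, etc.), which is homeomorphic to $\r^{6\tilde g-3+3n}=\r^{3g-3}$ by the dimension count $6\tilde g-3+3n=3(g+1-n)-3+3n=3g-3$ (this matches the statement of Theorem~\ref{thm-moduli-Klein}). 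Over each such point, Proposition~\ref{prop-canon1} says that a real $m$-Arf function $\arf$ of type $t$ admits a \emph{canonical} symmetric generating set, i.e.\ one on which the values of $\arf$ are the explicit constants listed in Definition~\ref{def-canon}, determined entirely by $t$; and Proposition~\ref{prop-canon2} says that for the given $\calB$ there is a real $m$-Arf function of type $t$ for which $\calB$ is canonical, provided $t$ satisfies the conditions of Proposition~\ref{topinv-nec} — which is our hypothesis. Hence the assignment sending a point of the Fricke space together with the choice of $\calB$ to the pair $((P,\tau),\arf)$ with $\arf$ the unique real $m$-Arf function whose values on $\calB$ are the canonical ones gives a well-defined continuous surjection $\r^{3g-3}\to S(t)$. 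It is continuous because the $m$-spin bundle, viewed through its monodromy/holonomy data, depends continuously (indeed real-analytically) on the Fuchsian group, the values on $\calB$ being locally constant; and it is a local homeomorphism because $\tT_{\tilde g,n}$ and the bundle data together form a smooth family.

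Then I would identify $S(t)$ with $\r^{3g-3}/\Mod_t$ for the appropriate group. Two points of $\r^{3g-3}$ (with the same auxiliary $\calB$) map to topologically equivalent, hence isomorphic-after-deformation, spin bundles precisely when they differ by an element of the mapping class group $\Mod_{g,k,\ve}$ of Theorem~\ref{thm-moduli-Klein} acting compatibly with the Arf data; and changing $\calB$ among symmetric generating sets also acts through this group. By Proposition~\ref{topeq}, any two real $m$-Arf functions of the same topological type $t$ on a fixed $(P,\tau)$ are carried into each other by a $\tau$-equivariant homeomorphism of $P$, which is exactly the statement that the fibres of $S(t)\to\calM_{g,k,\ve}$ are single orbits of the relevant subquotient group. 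Let $\Mod_t\subseteq\Mod_{g,k,\ve}$ be the subgroup consisting of those mapping classes that preserve the topological type of the canonical data (equivalently, that can be realised by $\tau$-equivariant homeomorphisms fixing $t$); this is a discrete group of diffeomorphisms of $\r^{3g-3}$ because $\Mod_{g,k,\ve}$ is, and one checks it acts properly discontinuously by the corresponding property for $\Mod_{g,k,\ve}$ acting on the Fricke space. Then $S(t)\cong\r^{3g-3}/\Mod_t$ as a set; since the covering map $\r^{3g-3}\to S(t)$ is a local diffeomorphism and $\Mod_t$ acts by diffeomorphisms, this is a diffeomorphism of manifolds (orbifolds), and in particular $S(t)$ is connected as a quotient of connected $\r^{3g-3}$.

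The main obstacle, and the step deserving the most care, is the identification of the fibre equivalence with an honest group action — i.e.\ showing that $S(t)$ is not merely a set-theoretic quotient but genuinely $\r^{3g-3}/\Mod_t$ with $\Mod_t$ discrete and acting properly discontinuously by diffeomorphisms. This requires (i) checking that the construction of the spin bundle from Fricke data is smooth and that topologically equivalent Arf functions correspond to the action of $\Mod_{g,k,\ve}$, using Definition~\ref{def-topeq-Arf} and Proposition~\ref{topeq}; (ii) verifying that the stabiliser-type subgroup $\Mod_t$ is well-defined independently of the choices of $n$, of the decomposition in two halves (Proposition~\ref{prop-two-halves}), and of the symmetric generating set; and (iii) deducing proper discontinuity of $\Mod_t$ from that of $\Mod_{g,k,\ve}$ on $T_{\tilde g,n}$. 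All the genuinely new input (existence and rigidity of canonical generating sets) is already packaged in Propositions~\ref{prop-canon1}, \ref{prop-canon2} and \ref{topeq}, so the remaining work is to transport the Klein-surface moduli statement of Theorem~\ref{thm-moduli-Klein} across the spin/Arf correspondence, which is essentially bookkeeping once the group action is set up correctly.
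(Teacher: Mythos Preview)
Your proposal is correct and follows essentially the same route as the paper: build a map $T_{\tilde g,n}\to S(t)$ by assigning to each Fricke point the Klein surface together with the unique real $m$-Arf function taking the canonical values of Definition~\ref{def-canon} on the associated symmetric generating set, use Proposition~\ref{prop-canon2} for well-definedness and Proposition~\ref{topeq} (transitivity of $\Mod_{g,k,\ve}$ on Arf functions of fixed type) together with Theorem~\ref{thm-moduli-Klein} for surjectivity, and then take $\Mod_t\subset\Mod_{g,k,\ve}$ to be the subgroup identifying the fibres. The only imprecision is your description of $\Mod_t$ as the classes ``preserving the topological type'': it should be the stabiliser of the specific canonical Arf function, since every element of $\Mod_{g,k,\ve}$ preserves the type by definition; this is exactly how the paper (implicitly) defines it.
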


\begin{proof}
In the case $\ve=1$ let $n=k$.
In the case $\ve=0$ we choose $n\in\{k+1,\dots,g-1\}$ such that $n\equiv g-1~(\mod2)$.
Let $\tilde g=(g+1-n)/2$.
By definition, to any $\psi\in\tilde T_{\tilde g,n}$ corresponds a generating set 
$$V=\{\psi(a_1),\psi(b_1),\dots,\psi(a_{\tilde g}),\psi(b_{\tilde g}),\psi(c_1),\dots,\psi(c_n)\}$$
of a Fuchsian group of signature~$(\tilde g,n)$.
The generating set~$V$ together with
$$\{\overline{\psi(c_1)},\dots,\overline{\psi(c_k)},\widetilde{\psi(c_{k+1})},\dots,\widetilde{\psi(c_n)}\}$$
generates a real Fuchsian group~$\Ga_{\psi}$.
On the Klein surface $(P,\tau)=[\Ga_{\psi}]$, we consider the corresponding symmetric generating set
$$
  \calB_{\psi}
  =(a_1,b_1,\dots,a_{\tilde g},b_{\tilde g},a_1',b_1',\dots,a_{\tilde g}',b_{\tilde g}',c_1,\dots,c_{n-1},d_1,\dots,d_{n-1}).
$$
Proposition~\ref{prop-canon2} implies that there exists a real $m$-Arf function $\arf=\arf_{\psi}$ of type~$t$
for which $\calB_{\psi}$ is canonical.
According to Theorem~3{.}11 in~\cite{NP:2016},
an $m$-spin bundle $\Om(\psi)\in S(t)$ is associated with this $m$-Arf function.
The correspondence $\psi\mapsto\Om(\psi)$ induces a map $\Om:T_{\tilde g,n}\to S(t)$.
Let us prove that $\Om(T_{\tilde g,n})=S(t)$.
Indeed, by Theorem~\ref{thm-moduli-Klein}, the map
$$\Psi=\Phi\circ\Om:T_{\tilde g,n}\to S(t)\to\calM_{g,k,\ve},$$
where $\Phi$ is the natural projection, satisfies the condition
$$\Psi(T_{\tilde g,n})=\calM_{g,k,\ve}.$$
The fibre of the map $\Psi$ is represented by the group~$\Mod_{g,k,\ve}$
of all self-homeo\-mor\-phisms of the Klein surface $(P,\tau)$.
By Proposition~\ref{topeq}, this group acts transitively on the set of all real $m$-Arf functions of type~$t$
and hence, by Theorem~3{.}11 in~\cite{NP:2016}, transitively on the fibres $\Phi^{-1}((P,\tau))$.
Thus
$$\Om(T_{\tilde g,n})=S(t)=T_{\tilde g,n}/\Mod_t,\quad\text{where}~\Mod_t\subset\Mod_{g,k,\ve}$$
According to Theorem~\ref{thm-moduli-Klein}, the space~$T_{\tilde g,n}$ is diffeomorphic to~$\r^{3g-3}$.
\end{proof}


\begin{thm}
\label{thm-moduli-branching}
Let $(g,k,\ve)$ be a topological type of a Klein surface.
Assume that the geometric genus of such Klein surfaces is positive, 
i.e.\ $k\le g-2$ if~$\ve=0$ and $k\le g-1$ if $\ve=1$. 
Let $t$ be a tuple that satisfies the conditions of Proposition~\ref{topinv-nec}.
The space $S(t)$ of all real $m$-spin bundles of type~$t$ on a Klein surface of type $(g,k,\ve)$
is an $N(t)$-fold covering of $\calM_{g,k,\ve}$,
where $N(t)$ is the number of real $m$-Arf functions on $(P,\tau)$ of topological type $t$.
The number $N(t)$ is equal to
\begin{enumerate}[1)]
\item
Case $\ve=0$, $m\equiv0~(\mod2)$, $t=(g,\de,k_0,k_1)$:
$$N(t)={k\choose k_1}\cdot\frac{m^g}2.$$ 
\item
Case $\ve=1$, $m\equiv0~(\mod2)$, $t=(g,\tilde\de,k^0_0,k^0_1,k^1_0,k^1_1)$:
Let
$$M={k\choose k_0}\cdot{k_0\choose k_0^0}\cdot{k_1\choose k_1^0}.$$
\begin{enumerate}[$\bullet$]
\item
Case $g>k+1$, ($m\equiv0~(\mod4)$ or $k_0\ne0$):
$$
  N(t)=2^{1-k}\cdot m^g\cdot M\quad\text{for}~\tilde\de=0
  ~\text{and}~
  N(t)=0\quad\text{for}~\tilde\de=1.
$$
\item
Case $g>k+1$, $m\equiv2~(\mod4)$, $k_0=0$:
\begin{align*}
  N(t)&=\left(2^{-k}+2^{-\frac{g+k+1}2}\right)\cdot m^g\cdot M\quad\text{for}~\tilde\de=0,\\
  N(t)&=\left(2^{-k}-2^{-\frac{g+k+1}2}\right)\cdot m^g\cdot M\quad\text{for}~\tilde\de=1.
\end{align*}
\item
Case $g=k+1$, ($m\equiv0~(\mod4)$ or $k_0\ne0$):
$$
  N(t)=2^{-(k-1)}\cdot m^{k+1}\cdot M\quad\text{for}~\tilde\de=1
  ~\text{and}~
  N(t)=0\quad\text{for}~\tilde\de=2.
$$
\item
Case $g=k+1$, $m\equiv2~(\mod4)$, $k_0=0$:
\begin{align*}
  N(t)&=3\cdot 2^{-(k+1)}\cdot m^{k+1}\cdot M\quad\text{for}~\tilde\de=1,\\
  N(t)&=2^{-(k+1)}\cdot m^{k+1}\cdot M\quad\text{for}~\tilde\de=2.
\end{align*}
\end{enumerate}
\item
Case $m\equiv1~(\mod2)$, $t=(g,k)$:
$$N(t)=m^g.$$
\end{enumerate}
\end{thm}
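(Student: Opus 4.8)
The plan is to establish the two assertions in turn. For the covering statement I would use the description of $S(t)$ obtained in the proof of Theorem~\ref{thm-moduli-spin}, namely $S(t)=T_{\tilde g,n}/\Mod_t$ and $\calM_{g,k,\ve}=T_{\tilde g,n}/\Mod_{g,k,\ve}$ with $\Mod_t\subseteq\Mod_{g,k,\ve}$ the stabiliser of a single real $m$-Arf function of type~$t$. By Theorem~3{.}11 in~\cite{NP:2016} the fibre of $\Phi\colon S(t)\to\calM_{g,k,\ve}$ over $[(P,\tau)]$ is the set of $m$-spin bundles of type~$t$ on $(P,\tau)$, i.e. the set of real $m$-Arf functions of type~$t$ on $(P,\tau)$; by Proposition~\ref{topeq} the group $\Mod_{g,k,\ve}$ permutes this set transitively, so its cardinality equals the index $[\Mod_{g,k,\ve}:\Mod_t]$, which is in particular the same finite number $N(t)$ for every $(P,\tau)$ of type $(g,k,\ve)$ by Weichold's theorem. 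Thus $\Phi$ is an $N(t)$-fold covering, and it remains to compute $N(t)$ as the number of real $m$-Arf functions of type~$t$ on one fixed Klein surface $(P,\tau)$.

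For this I would fix a symmetric generating set $\calB$ chosen to make the bookkeeping transparent: when $\ve=0$ take $n=g+1$, so that $\tilde g=0$ and $\calB=(c_1,\dots,c_g,d_1,\dots,d_g)$ with ovals $c_1,\dots,c_k$ and twists $c_{k+1},\dots,c_g$; when $\ve=1$ one is forced to take $n=k$ and $\tilde g=(g+1-k)/2\ge1$. A real $m$-Arf function is determined by its values on $\calB$, and by Theorems~4{.}9 and~4{.}10 in~\cite{NP:2016} the admissible tuples are exactly those satisfying the congruences recalled in the proof of Proposition~\ref{topinv-nec}, together with the reality relations $\al_i'=\al_i$, $\be_i'=\be_i$, the restriction $\ga_i\in\{0,m/2\}$ on ovals (and $\ga_i=0$ for odd $m$), $\ga_i=0$ on twists, and no further relation on the values $\al_i,\be_i,\de_i$. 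Counting these: in the odd case only $\ga_i\equiv0$ survives, the $2\tilde g+(n-1)=g$ values $\al_i,\be_i,\de_i$ are free, and $N(t)=m^g$. In the case $\ve=0$, $m$ even, the oval placement contributes $\binom{k}{k_1}$ (consistent since $k_1m/2\equiv1-g$), the $g$ bridge values $\de_i$ are free, giving $m^g$, and the formula $\de\equiv\sum(1-\ga_i)(1-\de_i)\pmod2$ of Theorem~4{.}9(4) in~\cite{NP:2016}, together with the presence of a twist in $\calB$ (here $k<g$, so e.g. $c_{k+1}$), shows that changing one $\de_i$ flips $\de$; hence the $m^g$ bridge tuples split evenly over the two values of $\de$ and $N(t)=\binom{k}{k_1}\cdot m^g/2$.

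The substantial case is $\ve=1$, $m$ even. I would first count the real $m$-Arf functions with prescribed oval data $(k_0,k_1)$, ignoring $\tilde\de$ and the similarity split: the oval placement contributes $\binom{k}{k_0}$, the bridge values $\de_1,\dots,\de_{k-1}$ are free ($m^{k-1}$ choices) and the values $\al_i,\be_i$ are free ($m^{2\tilde g}$ choices), for a total $\binom{k}{k_0}\cdot m^{2\tilde g+k-1}=\binom{k}{k_0}\cdot m^g$. Next I would refine by the similarity split: by Definition~\ref{def-similar} an oval $c_i$ with $i<k$ lies in the class of $c_k$ precisely when $\de_i$ is odd, so fixing $(k_0^0,k_0^1,k_1^0,k_1^1)$ pins the parities of the $\de_i$, cutting the bridge tuples from $m^{k-1}$ to $(m/2)^{k-1}=2^{1-k}m^{k-1}$ and, after the elementary count of distributions of the ovals into value/class slots with $c_k$ in the chosen class, replacing $\binom{k}{k_0}$ by $M=\binom{k}{k_0}\binom{k_0}{k_0^0}\binom{k_1}{k_1^0}$; the total over all $\tilde\de$ is then $2^{1-k}\cdot m^g\cdot M$. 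Finally I would distribute this total according to $\tilde\de$ by counting the $(\al_i,\be_i)$ via the enumeration of $m$-Arf functions on a surface of genus $\tilde g$ with $k$ holes and prescribed boundary values from~\cite{NP:2009}. When Proposition~\ref{topinv-nec}(2) leaves $\tilde\de$ genuinely undetermined — i.e. $g>k+1$, $m\equiv2\pmod4$, $k_0=0$ — the boundary values are all odd, $\tilde\de\equiv\sum_{i=1}^{\tilde g}(1-\al_i)(1-\be_i)\pmod2$, and the number of $(\al_i,\be_i)\in(\z/m\z)^{2\tilde g}$ making this even, resp. odd, is $\tfrac12m^{2\tilde g}(1\pm2^{-\tilde g})$; substituting and using $k+\tilde g=(g+k+1)/2$ gives $N(t)=2^{-k}m^gM\pm2^{-(g+k+1)/2}m^gM$. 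In every other sub-case $\tilde\de$ is forced by Proposition~\ref{topinv-nec}(2) to a single value, so the whole total $2^{1-k}m^gM$ is attributed to that value and $N(t)=0$ for the other — except for $g=k+1$, $m\equiv2\pmod4$, $k_0=0$, where $\tilde g=1$, $\tilde\de\in\{1,2\}$, and the $\tilde g=1$ enumeration of~\cite{NP:2009} together with $\gcd(m,\tfrac{m}{2}+1)=2$ splits the total in ratio $3:1$, giving $N(t)=3\cdot2^{-(k+1)}m^{k+1}M$ for $\tilde\de=1$ and $N(t)=2^{-(k+1)}m^{k+1}M$ for $\tilde\de=2$.

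The main obstacle is this last case, in two places. First, justifying the factor $2^{1-k}$: one must show that the parities of the bridge values $\de_i$ exactly encode the similarity split and that all $2^{k-1}$ parity patterns are realised by genuine real $m$-Arf functions, which requires the bridge description of the $d_i$ (Definition~\ref{def-bridge}) and a comparison with the real $2$-Arf function $(\arf\bmod2)$ — essentially the $m=2$ analysis of~\cite{Nbook} transported to even $m$, in the spirit of Lemma~\ref{lem-modify-bridges}. Second, the uneven ``theta-like'' split of the total according to $\tilde\de$ rests on the precise enumeration of $m$-Arf functions with odd boundary values in~\cite{NP:2009}; the bookkeeping must cleanly separate every factor independent of $\tilde\de$ (the bridge contribution, the placement count $M$, and the second half, which is determined by $\tau$) before that enumeration is applied, after which the arithmetic identities $k+\tilde g=(g+k+1)/2$ and $2\tilde g+k-1=g$ put the answer into the stated closed form. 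The remaining sub-cases are routine once one observes that Proposition~\ref{topinv-nec}(2) pins $\tilde\de$ down and hence sends the whole total to a single value.
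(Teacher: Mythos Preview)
Your proposal is correct and follows essentially the same strategy as the paper: deduce the covering statement from $S(t)=T_{\tilde g,n}/\Mod_t$ with $\Mod_t\subset\Mod_{g,k,\ve}$, then count real $m$-Arf functions of type~$t$ on a fixed $(P,\tau)$ by enumerating admissible value tuples on a symmetric generating set and splitting according to the relevant Arf-type invariant.

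There is one genuine, if modest, difference worth noting. In the case $\ve=0$, $m$ even, the paper works with an arbitrary admissible $n$ (hence arbitrary $\tilde g\ge1$), counts $m^{2\tilde g}$ choices for $(\al_i,\be_i)$ and $m^{n-1}$ for $(\de_i)$, and then invokes Theorem~4{.}9(5) of~\cite{NP:2016} as a black box to get the even split over~$\de$; the identity $2\tilde g+(n-1)=g$ reassembles the answer. You instead fix $n=g+1$, so $\tilde g=0$ and there are no $(\al_i,\be_i)$ at all, and you obtain the even split directly by the involution $\de_{k+1}\mapsto\de_{k+1}+1$ at a twist (available since $k\le g-2<g$), which visibly flips $\de$ in the formula $\de\equiv\sum(1-\ga_i)(1-\de_i)\pmod2$. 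This is a clean simplification: it avoids an external citation and makes the halving transparent. The price is that one must check that the parametrisation of real $m$-Arf functions by values on a symmetric generating set (Theorems~4{.}9/4{.}10 of~\cite{NP:2016}) is stated in a form that covers $\tilde g=0$; the paper's own argument sidesteps this by keeping $\tilde g\ge1$. In the separating case $\ve=1$ your argument is essentially identical to the paper's, including the $2^{\tilde g-1}(2^{\tilde g}\pm1)(m/2)^{2\tilde g}$ count for $\tilde\de$ when all boundary values are odd and the $3{:}1$ split when $\tilde g=1$.
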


\begin{proof}
According to Theorem~\ref{thm-moduli-spin}, $S(t)\cong T_{\tilde g,n}/\Mod_t$, where $\Mod_t\subset\Mod_{g,k,\ve}$,
hence $S(t)$ is a branched covering of $\calM_{g,k,\ve}=T_{\tilde g,n}/\Mod_{g,k,\ve}$
and the branching index is equal to the index of the subgroup $\Mod_t$ in $\Mod_{g,k,\ve}$,
i.e.\ is equal to the number~$N(t)$ of real $m$-Arf functions on $(P,\tau)$ of topological type $t$.
Let
$$
  \calB
  =(a_1,b_1,\dots,a_{\tilde g},b_{\tilde g},a_1',b_1',\dots,a_{\tilde g}',b_{\tilde g}',c_1,d_1,\dots,c_{n-1},d_{n-1})
$$
be a symmetric generating set of~$\piorb(P)$.
Let $\calV=(\al_i,\be_i,\al_i',\be_i',\ga_i,\de_i)$ denote the set of values of an $m$-Arf function on~$\calB$.
\begin{enumerate}[1)]
\item
Case $\ve=0$, $m\equiv0~(\mod2)$, $t=(g,\de,k_0,k_1)$:
There are ${k\choose k_1}$ ways to choose the values $\ga_i$.
There are $m^{2\tilde g}$ ways to choose $\al_i=\al_i'$ and $\be_i=\be_i'$.
According to Theorem~4{.}9(5) in~\cite{NP:2016},
out of $m^{n-1}$ ways to choose $\de_1,\dots,\de_{n-1}$
there are $m^{n-1}/2$ which give $\Si\equiv0~(\mod2)$ and $m^{n-1}/2$ which give $\Si\equiv1~(\mod2)$.
Thus the number of real $m$-Arf functions of type $(g,\de,k_0,k_1)$ is
$$
  {k\choose k_1}\cdot m^{2\tilde g}\cdot\frac{m^{n-1}}2
  ={k\choose k_1}\cdot\frac{m^{2\tilde g+n-1}}2
  ={k\choose k_1}\cdot\frac{m^g}2.
$$
\item
Case $\ve=1$, $m\equiv0~(\mod2)$, $t=(g,\tilde\de,k^0_0,k^0_1,k^1_0,k^1_1)$:
There are $M={k\choose k_0}\cdot{k_0\choose k_0^0}\cdot{k_1\choose k_1^0}$ ways
to choose the values $\ga_i$.
Furthermore having fixed the parity of $\de_i$,
there are $(m/2)^{k-1}$ ways to choose the values of $\de_i$.
Hence the number of such real $m$-Arf functions on~$P$ is equal to 
$$
  m^{2\tilde g}\cdot\left(\frac{m}2\right)^{k-1}\cdot M
  =\frac{m^{2\tilde g+k-1}}{2^{k-1}}\cdot M
  =m^g\cdot 2^{1-k}\cdot M.
$$
\begin{enumerate}[$\bullet$]
\item
In the case $g>k+1$, $m\equiv2~(\mod4)$, $k_0=0$,
the resulting invariant~$\tilde\de$ is given by
$$\tilde\de\equiv\sum\limits_{i=1}^{\tilde g}\,(1-\al_i)(1-\be_i)~(\mod2).$$
It can be shown by induction that out of $m^{2\tilde g}$ ways to choose the values $\al_i$, $\be_i$
we get the Arf invariant $\tilde\de=0$ in $2^{\tilde g-1}(2^{\tilde g}+1)(m/2)^{2\tilde g}$ cases
and $\tilde\de=1$ in $2^{\tilde g-1}(2^{\tilde g}-1)(m/2)^{2\tilde g}$ cases.
Hence the number $N(t)$ with $\tilde\de$ equal to~$0$ and~$1$ respectively is
$$2^{\tilde g-1}(2^{\tilde g}\pm1)\left(\frac{m}2\right)^{2\tilde g}\left(\frac{m}2\right)^{k-1}\cdot M.$$
We simplify
\begin{align*}
  &2^{\tilde g-1}(2^{\tilde g}\pm1)\left(\frac{m}2\right)^{2\tilde g}\left(\frac{m}2\right)^{k-1}
  =(2^{2\tilde g-1}\pm2^{\tilde g-1})\left(\frac{m}2\right)^{2\tilde g+k-1}\\
  &=\left(2^{g-k}\pm2^{\frac{g-k-1}2}\right)\left(\frac{m}2\right)^{g}
  =\left(2^{g-k}\pm2^{\frac{g-k-1}2}\right)2^{-g}\cdot m^g\\
  &=\left(2^{-k}\pm2^{\frac{-g-k-1}2}\right)m^g
  =\left(2^{-k}\pm2^{-\frac{g+k+1}2}\right)m^g
\end{align*}
to obtain $N(t)$ as stated.
\item
In the case $g>k+1$, ($m\equiv0~(\mod4)$ or $k_0\ne0$),
the Arf invariant of all $m$-Arf functions we construct is $\tilde\de=0$, hence $N(t)$ is as stated.
\item
In the case $g=k+1$, $m\equiv2~(\mod4)$, $k_0=0$,
the Arf invariant of the resulting $m$-Arf function is given by
$$\tilde\de=\gcd\left(m,\al_1,\be_1,\frac{m}2+1\right).$$
Note that for $m\equiv2~(\mod4)$ we have $\gcd(m,m/2+1)=2$,
hence $\tilde\de=2$ if $\al_1$ and $\be_1$ are both even and $\tilde\de=1$ otherwise.
Out of $m^2$ ways to choose the values $\al_1$, $\be_1$
we get $\tilde\de=1$ in $3m^2/4$ cases and $\tilde\de=2$ in $m^2/4$ cases.
Hence the number $N(t)$ with $\tilde\de$ equal to~$1$ and~$2$ respectively is
$$\frac{2\pm1}4\cdot m^2\left(\frac{m}2\right)^{k-1}\cdot M=(2\pm1)\cdot\left(\frac{m}2\right)^{k+1}\cdot M.$$
\item
In the case $g=k+1$, ($m\equiv0~(\mod4)$ or $k_0\ne0$),
the Arf invariant of all $m$-Arf functions we construct is $\tilde\de=1$,
hence $N(t)$ is as stated.
\end{enumerate}
\item
Case $m\equiv1~(\mod2)$, $t=(g,k)$:
The statement follows from Theorem~4{.}10(3) in~\cite{NP:2016}.
\end{enumerate}
\end{proof}

\myskip\noindent
{\bf Example:}
Consider the case $g=3$, $m=4$.
Let $P$ be a compact Riemann surface of genus~$3$.
According to Weichold's classification for a Klein surface $(P,\tau)$ either $\ve=1$, $k\in\{2,4\}$ or $\ve=0$, $k\in\{0,1,2,3\}$.
Possible topological types of $4$-spin bundles on these Klein surfaces
are described in Propositions~\ref{topinv-nec} and~\ref{topinv-suff}.
Condition $k_1\cdot m/2\equiv1-g~(\mod m)$ becomes $2k_1\equiv-2~(\mod4)$ and is equivalent to $k_1$ being odd.

\myskip
For example there exist $4$-spin bundles on separating Klein surfaces $(P,\tau)$ with $k=2$
and for these bundles $k_0^0+k_0^1=k_1^0+k_1^1=1$ and $\tilde\de=1$,
i.e.\ the bundle is trivial on one of the ovals and non-trivial on the other and the $4$-spin bundle restricted to $P\backslash P^{\tau}$ is odd.
There are two possible topological types of such bundles up to the swap $k_j^i\leftrightarrow k_j^{1-i}$: 
$$
  (g,\tilde\de,k_0^0,k_1^0,k_0^1,k_1^1)=(3,1,1,1,0,0)
  \quad\text{and}\quad
  (g,\tilde\de,k_0^0,k_1^0,k_0^1,k_1^1)=(3,1,1,0,0,1).
$$
Proposition~\ref{topeq} implies that $4$-spin bundles on separating Klein surfaces of genus~$g=3$ with $k=2$
are topologically equivalent if and only if they have the same topological type.
Theorem~\ref{thm-moduli-branching} implies that the number~$N(t)$ of real $4$-spin bundles
of topological type $t=(g,\tilde\de,k_0^0,k_1^0,k_0^1,k_1^1)$
is $N(t)=64$ for $t=(3,1,1,1,0,0)$ and $t=(3,1,1,0,0,1)$.

\myskip
There exist $4$-spin bundles on non-separating Klein surfaces $(P,\tau)$ with $k=1$
and for these bundles $k_0=0$, $k_1=1$, i.e.\ the bundle is non-trivial on the only oval.
There are two possible topological types of such bundles:
$$
  (g,\de,k_0,k_1)=(3,0,0,1) 
  \quad\text{and}\quad
  (g,\de,k_0,k_1)=(3,1,0,1).
$$
Proposition~\ref{topeq} implies that $4$-spin bundles on non-separating Klein surfaces of genus~$g=3$ with $k=1$
are topologically equivalent if and only if they have the same topological type.
Theorem~\ref{thm-moduli-branching} implies that the number~$N(t)$ of real $4$-spin bundles
of topological type $t=(g,\de,k_0,k_1)$ is $N(t)=32$ for $t=(3,0,0,1)$ and $t=(3,1,0,1)$.

\myskip
Similarly separating Klein surfaces $(P,\tau)$ with $k=4$
admit $4$-spin bundles with topological types $(g=3,\tilde\de,k_0^0,k_1^0,k_0^1,k_1^1)$
with  $(k_0,k_1)=(k_0^0+k_0^1,k_1^0+k_1^1)=(1,3), (3,1)$
and non-separating Klein surfaces $(P,\tau)$ with $k=2,3$ admit $4$-spin bundles
of topological types $(g=3,\tilde\de,k_0^0,k_1^0,k_0^1,k_1^1)$ with
$(k_0^0+k_0^1,k_1^0+k_1^1)=(1,1), (0,3), (2,1)$,
while non-separating Klein surfaces $(P,\tau)$ with $k=0$ do not admit $4$-spin structures.
Geometric genus of Klein surfaces with $\ve=1$, $k=4$ and $\ve=0$, $k=2,3$ is equal to zero
and their topological equivalence is not considered in this paper.

\section{Applications in Singularity Theory}

\label{sec-sing}

\myskip
I.~Dolgachev in~\cite{Dolgachev:1975, Dolgachev:1977, Dolgachev:1983} described how all hyperbolic Gorenstein quasi-homogeneous surface singularities can be constructed by contracting the zero section of an $m$-spin bundle on $\hyp/\Ga$ for some Fuchsian group~$\Ga$.
(If the group~$\Ga$ has torsion, a more careful construction using a normal torsion-free subgroup of~$\Ga$ of finite index is necessary.)
Hence a Klein surface structure on~$\hyp/\Ga$ leads to an anti-holomorphic involution on the singularity, i.e.\ to a real form of the singularity.

\myskip
The correspondence between the weights of a quasi-homogeneous singularity and the signature of the Fuchsian group was studied in detail by K.~M\"ohring in~\cite{Moehring:dipl, Moehring:paper}.
In this paper we only consider the case where $\Ga$ is a surface group,
i.e.\ a Fuchsian group such that $\hyp/\Ga$ is a compact Riemann surface.
For general Gorenstein quasi-homogeneous surface singularities we need to consider Fuchsian groups $\Ga$ with torsion and $m$-spin bundles on the corresponding Klein orbifolds, i.e.\ on orbifolds $\hyp/\Ga$ with an anti-holomorphic involution.
The first results in this direction were obtained by Riley~\cite{Ril} who considered the case where the marked points of the orbifold $\hyp/\Ga$ do not lie on the set of real points~$P^{\tau}$.

\myskip
Let $\Ga$ be a Fuchsian group such that $\hyp/\Ga$ is a compact Riemann surface of genus~$g$.
Let $W$ be a corresponding weight system for a quasi-homogeneous singularity
as described in~\cite{Moehring:dipl, Moehring:paper}.
M\"ohring states (Example 2{.}7 in~\cite{Moehring:paper}) that among all quasi-homogeneous hypersurface singularities with the weight system~$W$ there is always a Brieskorn-Pham singularity, i.e.\ a singularity of the form $x^a+y^b+z^c=0$.
Moreover, a (non-regular) normal quasi-homogeneous hypersurface singularity corresponds to a surface group if and only if it has the same weight system as a Brieskorn-Pham singularity $x^a+y^b+z^c=0$
such that no prime divides only one of the exponents~$a,b,c$ (see section~7 in~\cite{Milnor:1975}).

\myskip
For example $4$-spin bundles on surfaces of genus~$3$ whose real forms were discussed above correspond
to Brieskorn-Pham singularities $x^{14}+y^7+z^2=0$ and $x^{12}+y^4+z^3=0$ (Example~2.3 in~\cite{Moehring:paper}).
It would be of interest to make the connection between the anti-holomorphic involutions on a Riemann surface
and on the corresponding singularities more explicit but this is beyond the scope of this paper.


\def\cprime{$'$}
\providecommand{\bysame}{\leavevmode\hbox to3em{\hrulefill}\thinspace}
\providecommand{\MR}{\relax\ifhmode\unskip\space\fi MR }
\providecommand{\MRhref}[2]{%
  \href{http://www.ams.org/mathscinet-getitem?mr=#1}{#2}
}
\providecommand{\href}[2]{#2}

\end{document}